\newtheorem{theorem}{Theorem}[section]
\newtheorem{proposition}[theorem]{Proposition}
\newtheorem{lemma}[theorem]{Lemma}
\newtheorem{remark}[theorem]{Remark}
\newcommand{\mL}{{\cal L}}
\newcommand{\R}{\mathbb{R}}
\newcommand{\beq}{\begin{equation}}
\newcommand{\eeq}{\end{equation}}
\newcommand{\N}{\mathbb{N}}
\newcommand{\Z}{{\mathbb{Z}}}
\newcommand{\A}{{\cal A}}
\newcommand{\mB}{{\mathcal{B}}}
\title{\bf Parabolic problems with general Wentzell boundary conditions and diffusion on the boundary}
\author{Davide Guidetti\\ \\
Dipartimento di matematica, \\
Piazza di Porta S. Donato 5, \\
40126 Bologna (Italy) \\
e-mail: davide.guidetti@unibo.it
}
\date{}
\begin{document}
\maketitle

\begin{abstract}
We show a result of maximal regularity in spaces of H\"older continuous function, concerning linear parabolic systems, with dynamic or Wentzell boundary conditions, with an elliptic diffusion term on the boundary. 
\end{abstract}

\medskip
{\bf AMS Subject classification}: 35K15, 35K20

\medskip

{\bf Keywords:} Maximal regularity, Dynamic boundary conditions; Wentzell boundary conditions; Diffusion term on the boundary. 

\medskip

\section{Introduction} 

In this paper we want to study second order parabolic systems in the forms

\begin{equation}\label{eq1.1}
\left\{\begin{array}{ll}
D_tu(t,x) = A(t,x, D_x) u(t,x) + f(t,x), & (t,x) \in [0, T] \times \Omega, \\ \\
D_tu(t,x') + B(t,x', D_x) u(t,x') - L(t)(u(t,\cdot)_{|\Gamma})(x')  = h(t,x'), & (t,x') \in [0, T] \times \partial \Omega, \\ \\
u(0,x) = u_0(x), & x \in \Omega. 
\end{array}
\right. 
\end{equation}
and
\begin{equation}\label{eq1.1B}
\left\{\begin{array}{ll}
D_tu(t,x) = A(t,x, D_x) u(t,x) + f(t,x), & (t,x) \in [0, T] \times \Omega, \\ \\
A(t,x', D_x) u(t,x') + B(t,x', D_x) u(t,x') - L(t)(u(t,\cdot)_{|\Gamma})(x')  = h(t,x'), & (t,x') \in [0, T] \times \partial \Omega, \\ \\
u(0,x) = u_0(x), & x \in \Omega. 
\end{array}
\right. 
\end{equation}
Here for every $t \in [0, T]$, $A(t,x,D_x)$ is a second order linear strongly elliptic operator in the open, bounded subset $\Omega$ of $\R^n$,  $L(t)$ is a second order linear strongly elliptic tangential operator in $\partial \Omega$, $B(t,x', D_x)$ is a first order (not necessarily tangential) operator in $\partial \Omega$. It is clear that, at least formally, (\ref{eq1.1}) and (\ref{eq1.1B}) are strictly related. 

A large amount of papers has been devoted to parabolic problems with dynamic and Wentzell boundary conditions in the form (\ref{eq1.1})-(\ref{eq1.1B}) in the case that the summand $L(t)(u(t,\cdot)_{|\Gamma})$ does not appear. We refer to the bibliographies in \cite{Gu3} and \cite{Gu2}. In our knowledge, a problem in the form (\ref{eq1.1}) was introduced for the first time in \cite{Ru1}, in the particular case that $A(t,x,D_x) = \alpha(x) \Delta_x$, with $\alpha$ positively valued,  $B(t,x', D_x) =  b(x') D_{\nu}$, with $\nu$ unit normal vector to $\partial \Omega$, pointing outside $\Omega$, $L(t) = a(x) \Delta_{LB}u$, where we indicate with $\Delta_{LB}$ the Laplace-Beltrami operator.  \cite{Ru1} contains a physical interpretation of the problem: briefly, a heat equation with a heat source on the boundary, that depends on the heat flow along the boundary, the heat flux across the boundary and the temperature at the boundary. 

The first paper where a problem in the form (\ref{eq1.1B}) is really studied seems to be \cite{FaGoGoObRo1}. In it it was considered the system
\begin{equation}\label{eq1.3}
\left\{\begin{array}{ll}
D_t u(t,x) = Au(t,x) = \nabla \cdot (a(x) \nabla u) (t,x), & (t,x) \in (0, T) \times \Omega, \\ \\
Au(t,x') + \beta(x') D_{\nu_A} u(t,x') + \gamma(x') -q \Delta_{LB}u(t,x') = 0, & (t,x') \in (0, T) \times \partial \Omega, \\ \\
u(0,x) = u_0(x),
\end{array}
\right. 
\end{equation}
with , $A$ strongly elliptic, $\beta(x') > 0$ in $\partial \Omega$, $D_{\nu_A}$ conormal derivative, $q \in [0, \infty)$. It is proved that, if $1 \leq p \leq \infty$ the closure a suitable realisation of the problem in the space $L^p(\Omega \times \partial \Omega)$ ($1 \leq p \leq \infty$), gives rise to an analytic semigroup (not strongly continuous if $p = \infty)$. The continuous dependence on the coefficients had already been considered in \cite{CoFaGoGoRo1}. 

In \cite{ClGoGoRo1} the authors generalised some of the results in \cite{FaGoGoObRo1}, considered also the case that the first equation in (\ref{eq1.3}) is the telegraph equation (with two initial conditions) and studied the asymptotic behaviour of solutions. 

In \cite{Wa3} the author considered the case of a domain  $\Omega$  with merely  Lipschitz boundary, with a strongly elliptic operator $A$ (independent of $t$). It was shown that a realisation of $A$ with the general boundary condition  $(Au)_{|\partial \Omega} - \gamma \Delta_{LB} u + D_{\nu_A} u + \beta u = g$ in $\partial \Omega$ generates a strongly continuous compact semigroup in $C(\overline \Omega)$. Semilinear problems were studied in \cite{Wa1} and \cite{Wa2}. 

Finally, in the paper \cite{VaVi1} the authors treated (\ref{eq1.1}) in the particular case  $A(t,x,D_x) = \Delta_x$, $f \equiv 0$, $h \equiv 0$, $L(t) =  l \Delta_{LB}$ with $l > 0$ and $B(t,x',D_x) = k D_{\nu}$, with $k$ which may be negative (in contrast with the previously quoted literature). They showed that, if the initial datum $u_0$ is in $H^1(\Omega)$ and $u_{0|\partial \Omega} \in H^1(\Gamma)$, (\ref{eq1.1}) has a unique solution $u$ in $C([0, \infty); H^1(\Omega)) \cap C^1((0, \infty); H^1(\Omega)) \cap C((0, \infty); H^3(\Omega))$, with $u_{|\Gamma}$ in $C([0, \infty); H^1(\Gamma)) \cap C^1((0, \infty); H^1(\Gamma)) \cap C((0, \infty); H^3(\Gamma))$. 

The main aim of this paper is to show that, in a suitable functional setting, the role of the operator $B(t,x',D_x)$ in (\ref{eq1.1}) and (\ref{eq1.1B}) is minor, in the sense that these equations can be treated as perturbations of
 the corresponding problems with $B(t,x',D_x) \equiv 0$. In fact, we shall see that  $B(t,x',D_x)$ may be, apart some limitations on the regularity of its coefficients, an arbitrary first order linear differential operator. Moreover we shall consider 
 problems with coefficients depending on $t$ and we shall obtain results of maximal regularity, that is, results establishing the existence of linear and topological isomorphisms between classes of data and classes of solutions. Following the 
 lines of \cite{Gu4} and \cite{Gu2},  we shall work in spaces of H\"older continuous functions. 
 
Now we are going to state our main results. We begin by introducing the following assumptions: 

\medskip

{\it (A1) $\Omega$ is an open, bounded subset of $\R^n$ ($n \in \N$, $n \geq 3$), lying on one side of its topological boundary $\Gamma$, which is a compact submanifold of $\R^n$ of class $C^{2 + \beta}$, for some $\beta \in (0, 1)$. 

\medskip

(A2) $A(t,x,D_x) = \sum_{|\alpha| \leq 2} a_\alpha(t,x) D_x^\alpha$, with $a_\alpha \in C^{\beta/2,\beta} ((0, T) \times  \Omega)$; in case $|\alpha| = 2$, $a_\alpha$ is real valued and there exists $\nu > 0$ such that, $\forall (t,x)  \in [0, T] \times \overline \Omega$, 
$\forall \xi \in \R^n$, $\sum_{|\alpha| = 2} a_\alpha(t,x) \xi^\alpha \geq \nu |\xi|^2$ (we shall briefly say that $A(x,D_x)$ is strongly elliptic). 

\medskip

(A3) $B(t,x', D_x) = \sum_{|\alpha| \leq 1} b_\alpha(t,x') D_x^\alpha$, with $b_\alpha \in C^{\beta/2,\beta} ((0, T) \times \Gamma)$. 

\medskip

(A4) $\forall t \in [0, T]$ $L(t)$ is a second order, partial differential operator in $\Gamma$. More precisely: for every local chart $(U, \Phi)$, with $U$ open in $\Gamma$ and $\Phi$ $C^{2+\beta}-$ diffeomorphism between $U$ and $\Phi(U)$, with $\Phi(U)$ open in $\R^{n-1}$,  $\forall v \in C_0^{2+\beta}(\Gamma)$ with compact support in $U$, 
$$
L (t)v(x')  = \sum_{|\alpha| \leq 2} l_{\alpha, \Phi}(t,x') D_y^\alpha (v \circ \Phi^{-1})(\Phi(x'));
$$
we suppose, moreover, that,  if $|\alpha| = 2$, $l_{\alpha, \Phi}$ is real valued, for every open subset $V$ of $U$, with $\overline V \subset \subset U$,  $l_{\alpha, \Phi |V} \in C^{\beta/2,\beta}((0, T) \times V)$,    and  there exists $\nu(V)$ positive such that, $\forall (t,x') \in V$,   
$\forall \eta \in \R^{n-1}$, $\sum_{|\alpha| = 2} l_{\alpha, \Phi} (t,x') \eta^\alpha \geq \nu(V)  |\eta|^2$.

}

\medskip

We want to prove the following

\begin{theorem}\label{th1.1}
Suppose that (A1)-(A4) are fulfilled. Then the following conditions are necessary and sufficient in order that (\ref{eq1.1}) have a unique solution $u$ belonging to $C^{1+\beta/2, 2+\beta}((0, T) \times \Omega)$: 

(a) $f \in C^{\beta/2, \beta}((0, T) \times \Omega)$;

(b) $h \in C^{\beta/2, \beta}((0, T) \times \Gamma)$;

(c) $u_0 \in C^{2+\beta}(\Omega)$; 

(d) $A(0,x',D_x) u_0(x') + f(0,x') = - B(0,x', D_x) u_0(x') + L (0)(u_{0|\Gamma})(x') + h(0,x'), \quad \forall x' \in \Gamma.$

\end{theorem}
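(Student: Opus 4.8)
The plan is to reduce the full problem \eqref{eq1.1} to its principal part by treating $B(t,x',D_x)$ as a perturbation, exactly as the introduction promises. More precisely, I would first establish the maximal regularity isomorphism for the reduced problem where $B \equiv 0$, i.e.\ the system with boundary equation $D_t u + L(t)(u_{|\Gamma}) = h$ on $[0,T]\times\Gamma$. This reduced result is presumably the technical core proved elsewhere in the paper; assuming it, one obtains that the linear map $u \mapsto (f,h,u_0)$ given by the reduced problem is a topological isomorphism between $C^{1+\beta/2,2+\beta}((0,T)\times\Omega)$ and the subspace of $C^{\beta/2,\beta}((0,T)\times\Omega)\times C^{\beta/2,\beta}((0,T)\times\Gamma)\times C^{2+\beta}(\Omega)$ cut out by the compatibility condition (d) (with $B\equiv 0$ there).

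The necessity of (a)--(d) is the easy direction. If $u \in C^{1+\beta/2,2+\beta}((0,T)\times\Omega)$ solves \eqref{eq1.1}, then reading off the first equation forces $f = D_t u - A(t,x,D_x)u \in C^{\beta/2,\beta}((0,T)\times\Omega)$ by (A2) and the mapping properties of the H\"older spaces under second-order operators, giving (a); likewise the boundary equation and (A3)--(A4) give (b), since $u_{|\Gamma}\in C^{1+\beta/2,2+\beta}((0,T)\times\Gamma)$ so that $L(t)(u_{|\Gamma})$ and $B(t,x',D_x)u$ both land in $C^{\beta/2,\beta}((0,T)\times\Gamma)$. Condition (c) is immediate from the trace $u(0,\cdot)\in C^{2+\beta}(\Omega)$, and (d) is obtained simply by evaluating the two PDEs at $t=0$ and using continuity up to $t=0$ to match the values of $D_t u(0,\cdot)$ on $\Gamma$.

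For sufficiency I would rewrite \eqref{eq1.1} with the $B$-term moved to the right-hand side: set $\tilde h := h + B(t,x',D_x)u$ and view $u$ as the solution of the reduced problem with data $(f,\tilde h, u_0)$. The difficulty is that $\tilde h$ depends on the unknown $u$, so this is a fixed-point / perturbation argument rather than a direct substitution. The key observation making it work is a gain of regularity in $B$: because $B$ is only first order while the reduced problem produces $u\in C^{1+\beta/2,2+\beta}$, the trace $B(t,x',D_x)u_{|\Gamma}$ lies in a space with an extra fractional order of smoothness in both $t$ and $x'$ compared to the target $C^{\beta/2,\beta}((0,T)\times\Gamma)$. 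I would exploit this to show that composing the reduced-problem solution operator with $u\mapsto B(\cdot)u$ is a contraction on a short time interval $[0,\tau]$ (or, more cleanly, that it is a compact/small perturbation so that $I$ minus this operator is invertible by a Neumann-series or Fredholm argument), whence the fixed point exists and is unique; a standard continuation in $t$ then covers the full interval $[0,T]$.

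The main obstacle I anticipate is twofold. First, one must verify that the compatibility condition (d) for the full problem is compatible with the reduced isomorphism: when $B$ is moved to the right-hand side, condition (d) is precisely the statement that the modified data $(f,\tilde h,u_0)$ satisfy the reduced compatibility relation at $t=0$, but $\tilde h(0,\cdot)$ itself involves $B(0,x',D_x)u_0$, and one has to check that this matching is preserved consistently under the fixed-point iteration so that every iterate stays in the admissible (compatible) subspace. Second, and more delicate, is quantifying the smallness of the perturbation: the first-order operator $B$ does not shrink as $\tau\to 0$ in the naive sup-norm, so the contraction must come from the H\"older seminorm in time picking up a factor $\tau^{\theta}$ for some $\theta>0$, which requires careful use of the fact that the reduced solution operator maps into functions with controlled behaviour at $t=0$ (pinned down by the compatibility condition). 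Getting this time-decay estimate sharp, uniformly in the coefficients' moduli of continuity supplied by (A2)--(A4), is where the real work lies.
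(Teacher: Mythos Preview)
Your strategy is correct and aligns with the paper's philosophy of treating $B$ as a lower-order perturbation of a reduced problem; the necessity argument and the identification of the two obstacles (compatibility and time-smallness) are both on target. There are, however, two methodological differences worth flagging.

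First, the ``reduced result'' that the paper actually proves (Theorem~\ref{th3.2}) has \emph{time-independent} $A$ and $L$, not the time-dependent version you assume. The paper then freezes the coefficients at a point $t_0$ and treats the differences $A(t_0+t,\cdot,D_x)-A(t_0,\cdot,D_x)$ and $L(t_0+t)-L(t_0)$ as perturbations $\A(t)$, $\mB(t)$ simultaneously with $B(t_0+t,\cdot,D_x)$, all within a single abstract perturbation lemma (Lemma~\ref{le3.4}). So your two-step plan (first get the time-dependent $B\equiv 0$ result, then perturb by $B$) would require you to run essentially the same machinery twice; the paper does it once by packaging all three perturbations together.

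Second, where you propose a fixed-point contraction, the paper instead derives a priori estimates for the family $T_\rho u := (D_t u - Au - \rho\A u,\ D_t u_{|\Gamma} - L u_{|\Gamma} + \rho\mB u)$, $\rho\in[0,1]$, and invokes the method of continuity (Proposition~\ref{pr1.20B}). Both routes exploit the same quantitative input, namely that the first-order term $Bu$ satisfies $\|Bu\|_{C^{\beta/2,\beta}((0,\tau)\times\Gamma)} \le C\tau^{1/2}\|u\|_{C^{1+\beta/2,2+\beta}}$ when $u(0,\cdot)=0$ (this is Lemma~\ref{le4.1}(III)), so your anticipated $\tau^\theta$ gain is exactly right with $\theta=1/2$. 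The continuation principle has the mild advantage that one never has to set up the iteration space or check that compatibility is preserved along iterates: your first obstacle simply disappears, because one works directly with solutions rather than approximants. Your contraction argument would work equally well once you subtract off $u_0$ to reduce to zero initial data, which fixes $\tilde h(0,\cdot)=h(0,\cdot)-B(0,\cdot,D_x)u_0$ and makes the compatibility automatic.
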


\begin{theorem}\label{th1.2}
Suppose that (A1)-(A4) are fulfilled. 
Then the following conditions are necessary and sufficient in order that (\ref{eq1.1B}) have a unique solution in $C^{1+\beta/2, 2+\beta}((0, T) \times \Omega)$: 

(a) $f \in C^{\beta/2, \beta}((0, T) \times \Omega)$;

(b) $h \in C^{\beta/2, \beta}((0, T) \times \Gamma)$;

(c) $u_0 \in C^{2+\beta}(\Omega)$; 

(d) $A(0,x',D_x) u_0(x') +  B(0,x', D_x)  - L (0)(u_{0|\Gamma})(x') = h(0,x'), \quad \forall x' \in \Gamma$.

\end{theorem}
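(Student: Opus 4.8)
The plan is to deduce Theorem~\ref{th1.2} from Theorem~\ref{th1.1}, by making rigorous the formal relation between (\ref{eq1.1B}) and (\ref{eq1.1}) already pointed out in the Introduction. The decisive observation is that a candidate solution $u\in C^{1+\beta/2,2+\beta}((0,T)\times\Omega)$ of the interior equation $D_tu=A(t,x,D_x)u+f$ has $D_tu$ and all $D_x^\alpha u$ with $|\alpha|\le2$ continuous up to $\Gamma$, so the interior equation may be evaluated on the boundary:
\[
A(t,x',D_x)u(t,x')=D_tu(t,x')-f(t,x'),\qquad (t,x')\in(0,T)\times\Gamma.
\]
This is the only point at which the full $C^{1+\beta/2,2+\beta}$ regularity is used, and it is what upgrades the purely formal identification to an exact one.

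Substituting this identity for $A(t,x',D_x)u$ in the Wentzell boundary condition of (\ref{eq1.1B}) turns that condition into
\[
D_tu(t,x')+B(t,x',D_x)u(t,x')-L(t)(u(t,\cdot)_{|\Gamma})(x')=h(t,x')+f(t,x'),
\]
which is precisely the boundary condition of (\ref{eq1.1}) with $h$ replaced by $\widetilde h:=h+f_{|\Gamma}$. Hence $u$ is a solution of (\ref{eq1.1B}) with data $(f,h,u_0)$, in $C^{1+\beta/2,2+\beta}((0,T)\times\Omega)$, if and only if $u$ is a solution in the same space of (\ref{eq1.1}) with data $(f,\widetilde h,u_0)$. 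This equivalence is an honest one-to-one correspondence between the two problems (not merely a formal implication), so it transports the existence, the uniqueness and the topological-isomorphism content of Theorem~\ref{th1.1} to (\ref{eq1.1B}).

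It then remains to check that the hypotheses (a)--(d) of Theorem~\ref{th1.1}, applied to $(f,\widetilde h,u_0)$, coincide with (a)--(d) of Theorem~\ref{th1.2}, applied to $(f,h,u_0)$. Conditions (a) and (c) are literally identical. For (b) I would use that restriction to the $C^{2+\beta}$ submanifold $\Gamma$ maps $C^{\beta/2,\beta}((0,T)\times\overline\Omega)$ into $C^{\beta/2,\beta}((0,T)\times\Gamma)$ without loss (being composition with a smooth embedding); since (a) gives $f\in C^{\beta/2,\beta}$, the trace $f_{|\Gamma}$ lies in $C^{\beta/2,\beta}((0,T)\times\Gamma)$, and therefore $h$ belongs to this space if and only if $\widetilde h=h+f_{|\Gamma}$ does. (In the necessity direction one notes that $f=D_tu-A(t,x,D_x)u\in C^{\beta/2,\beta}$ is forced as soon as a solution exists, so $f_{|\Gamma}$ is always meaningful.) Finally, inserting $\widetilde h(0,\cdot)=h(0,\cdot)+f(0,\cdot)$ into condition (d) of Theorem~\ref{th1.1} and cancelling the common summand $f(0,x')$ gives exactly
\[
A(0,x',D_x)u_0(x')+B(0,x',D_x)u_0(x')-L(0)(u_{0|\Gamma})(x')=h(0,x'),
\]
which is condition (d) of Theorem~\ref{th1.2}.

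I do not anticipate a real obstacle here: the entire content of Theorem~\ref{th1.2} is absorbed into Theorem~\ref{th1.1}, and the only delicate bookkeeping is the handling of the trace $f_{|\Gamma}$, both as an element of the correct anisotropic H\"older space and in its effect on the compatibility relation at $t=0$. Once the substitution above is justified, Theorem~\ref{th1.2} is an immediate corollary of Theorem~\ref{th1.1}.
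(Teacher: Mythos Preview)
Your proposal is correct and follows exactly the same route as the paper: the paper's proof simply observes that (\ref{eq1.1B}) is equivalent to (\ref{eq1.1}) with $h$ replaced by $h+f_{|\Gamma}$, and then invokes Theorem~\ref{th1.1}. You have supplied the justifications (evaluating the interior equation on $\Gamma$, checking that $f_{|\Gamma}\in C^{\beta/2,\beta}((0,T)\times\Gamma)$, and matching the compatibility conditions) that the paper leaves implicit.
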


Now we are going to describe the organisation of the paper. We begin by considering in Section \ref{se2} the parabolic problem 
\begin{equation}\label{eq2.2}
\left\{\begin{array}{ll}
D_t g(t,x') = Lg(t,x') + h(t,x'), & (t,x') \in (0, T) \times \Gamma, \\ \\
g(0,x') = g_0(x'), & x' \in \Gamma,
\end{array}
\right.
\end{equation}
with $L$ strongly elliptic in $\Gamma$. We do not impose the variational form of $L$. We show that the operator $\mL$, defined as 
\begin{equation}
\left\{\begin{array}{l}
D(\mathcal L) = \{g \in \cap_{1 \leq p < \infty} W^{2,p}(\Gamma) : Lg \in C(\Gamma)\}, \\ \\
\mathcal L g = Lg. 
\end{array}
\right.
\end{equation}
is the infinitesimal generator of an analytic semigroup in $C(\Gamma)$. This can be easily obtained, by local charts methods, employing well known analogous results in $\R^n$ (see \cite{Lu1}, Chapter 3). Employing
maximal regularity techniques in spaces of continuous and H\"older continuous functions (see again \cite{Lu1}), we determine  in Proposition \ref{pr2.3} necessary and sufficient conditions (analogous to well known conditions in $\R^{n-1}$), in order that (\ref{eq2.2}) have a unique solution in $C^{1+\beta/2,2+\beta}((0, T) \times \Gamma)$ (we shall recall the definition of these classes in the following). This first step is admittedly 
simply, but we were not able to find it in literature. 

In Section \ref{se3} we study systems (\ref{eq1.1}) and (\ref{eq1.1B}). Employing the results of Section \ref{se2} we begin by determining in Theorem \ref{th3.1}  necessary and sufficient conditions such that system (\ref{eq1.1}) have a unique solution in $C^{1+\beta/2,2+\beta}((0, T) \times \Omega)$ in the particular case $A(t,x,D_x) = A(x,D_x)$, $B(t,x',D_x) = 0$, $L(t) = L$ (independent of $t$). Finally, we obtain Theorem \ref{th1.1} from this particular case, by perturbation arguments. Theorem \ref{th1.2} is a simple consequence of Theorem \ref{th1.1}. 

We conclude this preliminary section by specifying some notations and by recalling some facts that we shall use. 

$C$, $C_0$, $C_1, \dots$ will indicate positive constants that we are not interested to precise and may be different from time to time. We shall write $C(\alpha, \beta, \dots)$ to indicate that the constant depends on $\alpha, \beta, \dots$. 

If $L$ is a tangential differential operator in the boundary $\Gamma$ and $u$ is defined in $[0, T] \times \overline \Omega$, we shall write $Lu(t,x')$ ($x' \in \Gamma$),  instead of $L(u(t,\cdot)_{|\Gamma})(x')$. If $X$ and $Y$ are Banach spaces, we shall indicate with $\mL(X,Y)$ the Banach space of linear, bounded operator from $X$ to $Y$. In case $X = Y$, we shall write $\mL(X)$. 

Let $X_0, X. X_1$ be Banach spaces, with $X_1 \hookrightarrow X \hookrightarrow X_0$, and let $\theta \in (0, 1)$. We shall write $X \in J_\theta (X_0,X_1)$ if there exists $C$ positive, such that, $\forall x \in X_1$, 
$$
\|x\|_X \leq C \|x\|_{X_0}^{1-\theta} \|x\|_{X_1}^{\theta}. 
$$

If $X_0$ and $X_1$ are complex, compatible Banach spaces, $\theta \in (0, 1)$, $p \in [1, \infty]$, we shall use the standard notation $(X_0,X_1)_{\theta,p}$ to indicate the corresponding real interpolation space. 

Let $\Omega$ be an open subset of $\R^n$. We shall indicate with  $B(\Omega)$ and $C(\Omega)$ the spaces of (respectively) complex valued, bounded and complex valued, uniformly continuous and bounded functions with domain $\Omega$. If $f \in C(\Omega)$, it is continuously extensible to its topological closure $\overline \Omega$. We shall identify $f$ with this extension. If $m \in \N$, we indicate with $C^m(\Omega)$ the class of functions $f$ in $C(\Omega)$, whose derivatives $D^\alpha f$, with order $|\alpha| \leq m$, belong to $C(\Omega)$. We shall equip these spaces with their natural norms:
$$
\|f\|_{C^m(\Omega)} = \max\{\|D^\alpha f\|_{C(\Omega)}: |\alpha| \leq m\},
$$
with $\|f\|_{C(\Omega)} := \sup_{x \in \Omega} |f(x)|$. If $\beta \in (0, 1)$, we set
$$[f]_{C^\beta(\Omega)}: = \sup_{x,y \in \Omega, x \neq y} |x-y|^{-\beta} |f(x) - f(y)|\}$$ 
and, if $m \in \N_0$, 
\begin{equation}
\|f\|_{C^{m+\beta}(\Omega)}:= \max\{\| f\|_{C^m(\Omega)}, \max\{ [D^\alpha f]_{C^\beta(\Omega)}Ê: |\alpha| = m\}\};
\end{equation}
of course, $C^{m+\beta}(\Omega) = \{f \in C^m(\Omega): \|f\|_{C^{m+\beta}(\Omega)} < \infty\}$. The previous definition can be extended in an obvious way to functions $f : \Omega \to X$, with $X$ Banach space. 

Let $0 \leq \beta_0 < \beta < \beta_1$. If there exists a common bounded extension operator from $C^\xi(\Omega)$ to $C^\xi(\R^n)$ $\forall \xi \in [\beta_0, \beta_1]$, then $C^\beta(\Omega) \in J_{\frac{\beta - \beta_0}{\beta_1 - \beta_0}}(C^{\beta_0}(\Omega), C^{\beta_1}(\Omega))$. This is a consequence of the embedding 
$$
(C^{\beta_0}(\Omega), C^{\beta_1}(\Omega))_{\frac{\beta - \beta_0}{\beta_1 - \beta_0}} = B^\beta_{\infty,1}(\Omega) \hookrightarrow C^\beta(\Omega). 
$$
(se Theorem 4 in \cite{Gu4} and the indicated references). 

If $I$ is an open interval in $\R$, $\Omega$ is an open subset of $\R^n$ and $\alpha, \beta$ are nonnegative, we set
\begin{equation}
C^{\alpha,\beta}(I \times \Omega):= C^\alpha(I; C(\Omega)) \cap C^\beta(\Omega; C(I)). 
\end{equation}
This is a Banach space, with the norm
\begin{equation}
\|f\|_{C^{\alpha,\beta}(I \times \Omega)}:= \max\{\|f\|_{C^{\alpha}(I; \Omega)}, \|f\|_{C^{\beta}(\Omega; C(I))}\}.
\end{equation}

The following facts hold (see \cite{Gu4}), Lemma 1: 

\begin{lemma}\label{le2.7B} (I) $C^\beta(\Omega; C(I)) \subseteq  B(I; C^\beta(\Omega))$. 

(II) Suppose $\alpha, \beta \geq 0$ with $\beta \not \in \Z$ and $\Omega$ such that there exists a common linear bounded extension operator, mapping $C(\Omega)$ into $C(\R^n)$ and
$C^\beta(\Omega)$ into $C^\beta(\R^n)$. Then, 
$$C^{\alpha,\beta}(I \times \Omega) = C^{\alpha}(I; C(\Omega)) \cap B(I; C^\beta(\Omega)). $$
Let $\beta \in (0,1)$ and suppose that   there exists a common linear bounded extension operator mapping $C^\gamma(\Omega)$ into $C^\gamma(\R^n)$, $\forall \gamma \in [0, 2+\beta]$. 
Then

(III) $C^{1+\beta/2, 2+\beta}(I \times \Omega) = C^{1+\beta/2}(I; C(\Omega)) \cap B(I; C^{2+\beta}(\Omega))$; 

(IV) if $f \in C^{1+\beta/2, 2+\beta}(I \times \Omega)$, $D_t f \in B(I; C^\beta(\Omega))$; 

(V) $C^{1+\beta/2, 2+\beta}(I \times \Omega) \subseteq C^{\frac{1+\beta}{2}}(I; C^1(\Omega)) \cap C^{\beta/2}(I; C^2(\Omega))$.

\end{lemma}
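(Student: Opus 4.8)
The plan is to prove the five assertions in the order stated, reducing each later one to the earlier ones and to the interpolation facts recalled above. Parts (I) and (II) I would settle by \emph{freezing} one variable. For (I), if $f\in C^\beta(\Omega;C(I))$ then for every fixed $t$ and every $x\ne y$ one has $|f(t,x)-f(t,y)|\le\|f(\cdot,x)-f(\cdot,y)\|_{C(I)}\le[f]_{C^\beta(\Omega;C(I))}|x-y|^\beta$, so $\sup_t[f(t,\cdot)]_{C^\beta(\Omega)}\le[f]_{C^\beta(\Omega;C(I))}$; together with $\sup_t\|f(t,\cdot)\|_{C(\Omega)}<\infty$ this is exactly $f\in B(I;C^\beta(\Omega))$. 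For (II), the inclusion $\subseteq$ is (I) intersected with $C^\alpha(I;C(\Omega))$; for the reverse inclusion one freezes $t$ again, so that from $f\in B(I;C^\beta(\Omega))$ the $x$-increments of $f(t,\cdot)$ (and, when $\beta>1$, of its $x$-derivatives) are bounded uniformly in $t$, yielding the required Hölder continuity of $x\mapsto f(\cdot,x)\in C(I)$. It is here, for $\beta>1$, that the common extension operator to $\R^n$ enters: it guarantees that the $x$-derivatives of the $C(I)$-valued map exist as uniform-in-$t$ limits of difference quotients and again lie in $C(I)$, and the hypothesis $\beta\notin\Z$ is what makes the top-order statement a genuine Hölder condition.

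Part (III) I would obtain simply as the special case of (II) with $\alpha=1+\beta/2$ and ``$\beta$'' replaced by $2+\beta$: since $\beta\in(0,1)$ we have $2+\beta\notin\Z$, and the assumed common bounded extension operator on $[0,2+\beta]$ supplies in particular the operator required by (II) on $C(\Omega)$ and $C^{2+\beta}(\Omega)$.

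Part (IV) is the real obstacle; a naive single-scale interpolation of the time increments loses the sharp parabolic exponent, so I would argue through the $K$-functional, which alone decouples the splitting scale from the spatial scale. Fix $t$ and, for small $h>0$ with $t+h\in I$, set $w_h:=h^{-1}(f(t+h,\cdot)-f(t,\cdot))$ (use the backward quotient $h^{-1}(f(t,\cdot)-f(t-h,\cdot))$ near $\sup I$). Writing $D_tf(t,\cdot)-w_h=-h^{-1}\int_t^{t+h}(D_tf(s,\cdot)-D_tf(t,\cdot))\,ds$ and using $D_tf\in C^{\beta/2}(I;C(\Omega))$ gives $\|D_tf(t,\cdot)-w_h\|_{C(\Omega)}\le C\,h^{\beta/2}$, while $f\in B(I;C^{2+\beta}(\Omega))$ (part (III)) gives $\|w_h\|_{C^{2+\beta}(\Omega)}\le 2h^{-1}\|f\|_{B(I;C^{2+\beta}(\Omega))}$. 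Splitting $D_tf(t,\cdot)=(D_tf(t,\cdot)-w_h)+w_h$ and choosing $h=s^{2/(2+\beta)}$ one finds $K(s,D_tf(t,\cdot);C(\Omega),C^{2+\beta}(\Omega))\le C\,s^{\beta/(2+\beta)}$ for all $s>0$, with $C$ independent of $t$. Since $\beta=\frac{\beta}{2+\beta}(2+\beta)\notin\Z$, the standard identification $(C(\Omega),C^{2+\beta}(\Omega))_{\frac{\beta}{2+\beta},\infty}=C^\beta(\Omega)$ (again via the extension operator and the $\R^n$ theory) yields $D_tf(t,\cdot)\in C^\beta(\Omega)$ with $\sup_t\|D_tf(t,\cdot)\|_{C^\beta(\Omega)}<\infty$, that is $D_tf\in B(I;C^\beta(\Omega))$. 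The point to get right is precisely that the spatial optimization is carried by the interpolation identity, not by a direct seminorm estimate, which is why (IV) needs genuinely the characterization of $C^\beta$ as an interpolation space and not just the $J$-inequality.

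Finally, part (V) I would deduce from (IV). Put $M:=\sup_t\|D_tf(t,\cdot)\|_{C^\beta(\Omega)}<\infty$; then for the time increment $g_\tau:=f(t+\tau,\cdot)-f(t,\cdot)=\int_t^{t+\tau}D_tf(s,\cdot)\,ds$ one has simultaneously $\|g_\tau\|_{C^\beta(\Omega)}\le M\tau$ and $\|g_\tau\|_{C^{2+\beta}(\Omega)}\le 2\|f\|_{B(I;C^{2+\beta}(\Omega))}$. Using the interpolation inequalities $C^{1}(\Omega)\in J_{\frac{1-\beta}{2}}(C^\beta(\Omega),C^{2+\beta}(\Omega))$ and $C^{2}(\Omega)\in J_{\frac{2-\beta}{2}}(C^\beta(\Omega),C^{2+\beta}(\Omega))$, valid under the extension hypothesis via $(C^\beta(\Omega),C^{2+\beta}(\Omega))_{\frac{j-\beta}{2},1}=B^{j}_{\infty,1}\hookrightarrow C^{j}$ for $j=1,2$, one gets $\|g_\tau\|_{C^1(\Omega)}\le C\,\tau^{(1+\beta)/2}$ and $\|g_\tau\|_{C^2(\Omega)}\le C\,\tau^{\beta/2}$, which are exactly the asserted Hölder estimates in $t$; boundedness and $t$-continuity of the $C^1$- and $C^2$-valued maps then follow from $f\in B(I;C^{2+\beta}(\Omega))\cap C^1(I;C(\Omega))$, giving $f\in C^{(1+\beta)/2}(I;C^1(\Omega))\cap C^{\beta/2}(I;C^2(\Omega))$.
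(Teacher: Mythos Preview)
The paper does not prove this lemma; it simply quotes it from \cite{Gu4}, Lemma~1, so there is no in-paper argument to compare your proposal against. Your proof is correct in outline and would serve as a legitimate independent proof. Two minor points are worth tightening. In (IV), since the parameter $h$ is constrained by the length of $I$, you should remark that for $s$ bounded away from $0$ the trivial bound $K(s,D_tf(t,\cdot))\le\|D_tf(t,\cdot)\|_{C(\Omega)}$ already controls $s^{-\beta/(2+\beta)}K(s,\cdot)$, so only small $s$ (hence small $h$) is at issue and your optimisation $h=s^{2/(2+\beta)}$ is admissible there. In (V), the estimate $\|g_\tau\|_{C^\beta(\Omega)}\le M\tau$ should be obtained pointwise in $x$, via $|g_\tau(x)-g_\tau(y)|\le\int_t^{t+\tau}|D_tf(s,x)-D_tf(s,y)|\,ds\le M\tau\,|x-y|^\beta$, rather than as a Bochner integral in $C^\beta(\Omega)$: part (IV) gives only $D_tf\in B(I;C^\beta(\Omega))$, not strong measurability or continuity of $s\mapsto D_tf(s,\cdot)$ with values in $C^\beta(\Omega)$. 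With these adjustments the $J$-interpolation step and the conclusion of (V) go through exactly as you wrote.
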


The previous definitions and results can be extended (by local charts) to functions $f : \Gamma \to X$, with $\Gamma$ suitably smooth differentiable manifold. In Section ref{se2} we shall also deal with the Besov space $B^2_{\infty,\infty}(\Gamma)$. This space can be defined by local charts, employing the following definition of the space $B^2_{\infty,\infty}(\R^{n-1})$: 
$$
B^2_{\infty,\infty}(\R^{n-1}) := \{f \in C^1(\R^{n-1}) : sup_{h \neq 0} |h|^{-1} |\nabla f(x+ h) - 2 \nabla f(x) + \nabla f(x-h)| < \infty\}. 
$$
It is known (see COMPLETARE ) that $B^2_{\infty,\infty}(\R^{n-1})$ properly contains the space $W^{2,\infty}(\R^n)$ of elements of $C^1(\R^n)$ with Lipschitz continuous first order derivatives. On the other hand, $B^2_{\infty,\infty}(\R^{n-1}) \subseteq C^{2-\epsilon}(\R^n)$ $\forall \epsilon \in (0, 2]$. We shall consider also spaces $W^{2,p}(\Gamma)$, just in the case of $\Gamma$ compact and of class $C^{2+\beta}$ ($\beta > 0$), which can be again defined by local charts. In Section \ref{se2} we shall employ Besov spaces $B^\alpha_{\infty,\infty}(\Gamma)$, with $\alpha \in [0, 2]$. We shall need the following facts, which can be easily deduced from analogous statements in $\R^{n-1}$
(see \cite{Gu5}): 

\begin{lemma} \label{le1.4}
Let $\Gamma$ be a compact submanifold of $\R^n$, of class $C^{2+\beta}$, for some $\beta > 0$. Then

(I)  if $\theta  \in [0, 2]$, $C^\theta(\Gamma) \subseteq B^\theta_{\infty,\infty}(\Gamma)$; 

(II) in case $\theta \not \in \Z$, $C^\theta(\Gamma) = B^\theta_{\infty,\infty}(\Gamma)$; 

(III) $\forall \theta \in (0, 1) \setminus \{\frac{1}{2}\}$, 
$$(C(\Gamma), C^2(\Gamma))_{\theta,\infty} = (B^0_{\infty,\infty}(\Gamma), B^2_{\infty,\infty}(\Gamma))_{\theta,\infty} = B^{2\theta}_{\infty,\infty}(\Gamma) = C^{2\theta}(\Gamma). $$
\end{lemma}

(a)

(b) in any case,

We shall employ the following version of the continuation principle: 

\begin{proposition}\label{pr1.20B}
Let $X, Y$ be Banach spaces and $L \in C([0, 1]; \mL(X,Y))$. Assume the following:

(a) there exists $M \in \R^+$, such that, $\forall x \in X$, $\forall \epsilon \in [0, 1]$,
$$
\|x\|_X \leq M \|L(\epsilon) x\|_Y; 
$$
(b) $L(0)$ is onto $Y$.

Then, $\forall \epsilon \in [0, 1]$ $L(\epsilon)$ is a linear and topological isomorphism between $X$ and $Y$. 
\end{proposition}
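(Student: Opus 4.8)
The plan is to prove this continuation principle by the standard method of combining a priori estimates with the openness and closedness of the set of parameters for which $L(\epsilon)$ is surjective. The uniform bound (a) is the crucial hypothesis: it immediately tells us that every $L(\epsilon)$ is injective with a bounded inverse on its range. Indeed, if $L(\epsilon)x = 0$ then $\|x\|_X \leq M\cdot 0 = 0$, so $x = 0$; and for any $y$ in the range, writing $y = L(\epsilon)x$ gives $\|x\|_X \leq M\|y\|_Y$, which is exactly the statement that $L(\epsilon)^{-1}$ (defined on the range) is bounded by $M$. So the only real content is to upgrade injectivity plus bounded-below to genuine surjectivity for every $\epsilon$, starting from the assumed surjectivity at $\epsilon = 0$.

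First I would define the set
\begin{equation}
S := \{\epsilon \in [0,1] : L(\epsilon) \text{ is onto } Y\},
\end{equation}
and observe $0 \in S$ by hypothesis (b), so $S \neq \emptyset$. The strategy is to show $S$ is both open and closed in $[0,1]$; since $[0,1]$ is connected this forces $S = [0,1]$, and then each $L(\epsilon)$ is a bounded bijection, hence a topological isomorphism by the open mapping theorem (the inverse being automatically bounded, with norm at most $M$). To prove closedness, suppose $\epsilon_k \to \epsilon$ with $\epsilon_k \in S$. For fixed $y \in Y$, pick $x_k$ with $L(\epsilon_k)x_k = y$; by (a) the $x_k$ are uniformly bounded. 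I would then estimate $\|L(\epsilon)x_k - y\|_Y = \|(L(\epsilon) - L(\epsilon_k))x_k\|_Y \leq \|L(\epsilon) - L(\epsilon_k)\|_{\mL(X,Y)}\|x_k\|_X \to 0$ using continuity of $L$. Combined with the lower bound applied to differences, $\|x_k - x_j\|_X \leq M\|L(\epsilon)(x_k - x_j)\|_Y$, one shows $(x_k)$ is Cauchy in $X$; passing to the limit $x_k \to x$ gives $L(\epsilon)x = y$, so $\epsilon \in S$.

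For openness I would use the classical Neumann-series perturbation argument. If $\epsilon_0 \in S$, then $L(\epsilon_0) : X \to Y$ is a bijection with bounded inverse (bound $M$). For $\epsilon$ near $\epsilon_0$, write $L(\epsilon) = L(\epsilon_0)\bigl(I + L(\epsilon_0)^{-1}(L(\epsilon) - L(\epsilon_0))\bigr)$; by continuity of $L$ the operator $L(\epsilon_0)^{-1}(L(\epsilon) - L(\epsilon_0))$ has norm strictly less than $1$ once $|\epsilon - \epsilon_0|$ is small enough, so $I$ plus it is invertible by the Neumann series, and hence $L(\epsilon)$ is onto. This places a whole neighborhood of $\epsilon_0$ inside $S$.

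The main obstacle, such as it is, lies in the closedness step rather than openness: one must be careful to extract a convergent sequence $x_k$ in $X$ rather than merely a bounded one, and the lower bound (a) is exactly what converts the boundedness and the Cauchy behavior of the images into Cauchy behavior of the $x_k$ themselves. This is where completeness of $X$ is essential. Everything else is routine functional analysis, and the uniformity of the constant $M$ across all $\epsilon$ (not just its finiteness for each fixed $\epsilon$) is what makes both the closedness argument and the final uniform bound on the inverses go through cleanly.
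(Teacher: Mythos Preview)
Your argument is correct. Note, however, that the paper does not actually supply a proof of this proposition: it is stated as a known tool (the method of continuity) and simply invoked later in the proof of Lemma~\ref{le3.4}. So there is no ``paper's own proof'' to compare against; what you have written is a standard and complete justification of the continuation principle.

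A small remark on presentation: your closedness step can be streamlined by first observing, directly from (a), that the range of every $L(\epsilon)$ is closed in $Y$ (if $L(\epsilon)x_n \to y$, then $(x_n)$ is Cauchy by the lower bound, hence convergent, and the limit maps to $y$). With this in hand, openness alone via the Neumann series already gives $S=[0,1]$, since the range of $L(\epsilon)$, being closed and containing the dense image one gets by perturbing from a nearby surjective $L(\epsilon_0)$, must be all of $Y$. But the open-and-closed connectedness argument you wrote is equally valid and entirely standard.
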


\section{Parabolic problems in $\Gamma$}\label{se2}

\setcounter{equation}{0}

In this section, we study the parabolic system (\ref{eq2.2}). We introduce the following as

\medskip

{\it (A4'). $L$ is a second order, partial differential operator in $\Gamma$. More precisely: for every local chart $(U, \Phi)$, with $U$ open in $\Gamma$, $\forall v \in C_0^{2+\beta}(\Gamma)$ with compact support in $U$, 
$$
L v(x')  = \sum_{|\alpha| \leq 2} l_{\alpha, \Phi}(x') D_y^\alpha (v \circ \Phi^{-1})(\Phi(x'));
$$
we suppose, moreover, if $|\alpha| = 2$, $l_{\alpha, \Phi}$ is real valued, for every open subset $V$ of $U$, with $\overline V \subset \subset U$,  $l_{\alpha, \Phi |V} \in C^{\beta/2,\beta}((0, T) \times V)$,    and, there exists $\nu(V)$ positive such that, $\forall x' \in V$,   
$\forall \eta \in \R^{n-1}$, $\sum_{|\alpha| = 2} l_{\alpha, \Phi} (x') \eta^\alpha \geq \nu(V)  |\eta^2$.

}

\medskip

 We begin by considering the elliptic system depending on the parameter $\lambda$
\begin{equation}\label{eq2.2A}
\lambda g(x') - Lg(x') = h(x'), \quad x' \in \Gamma. 
\end{equation}
We shall prove the following

\begin{theorem}\label{th2.1}
Suppose that (A1) and (A4) hold. Then:

(I) for every $\phi_0 \in [0, \pi)$ there exists $R(\phi_0) > 0$ such that, if $|Arg(\lambda)| \leq \phi_0$ and $|\lambda| \geq R(\phi_0)$, (\ref{eq2.2}) has a unique solution $g$ in $\bigcap_{1 \leq p < \infty} W^{2,p}(\Gamma)$; 

(II) $g$ belongs also to the Besov space $B^2_{\infty,\infty}(\Gamma)$; moreover, for every $\gamma \in [0, 2)$ there exists $C > 0$, depending on $\phi_0$ and $\gamma$, such that
$$
\|g\|_{C^\gamma(\Gamma)} \leq C |\lambda|^{-1+ \gamma/2} \|h\|_{C(\Gamma)}; 
$$

(III) if $h \in C^\beta(\Gamma)$, $g \in C^{2+\beta}(\Gamma)$. 
\end{theorem}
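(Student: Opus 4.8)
The plan is to prove Theorem~\ref{th2.1} by reducing the equation (\ref{eq2.2A}) on the manifold $\Gamma$ to corresponding resolvent estimates in $\R^{n-1}$ via a finite atlas and partition of unity, then quoting the classical theory for elliptic operators in $\R^{n-1}$ (as in \cite{Lu1}, Chapter~3). First I would fix a finite atlas $\{(U_j,\Phi_j)\}_{j=1}^N$ of $\Gamma$ together with a subordinate partition of unity $\{\psi_j\}$ and functions $\chi_j \in C_0^\infty(U_j)$ with $\chi_j \equiv 1$ on $\mathrm{supp}\,\psi_j$. In each chart the operator $L$ takes the local form with coefficients $l_{\alpha,\Phi_j}$, and by (A4) its principal part is strongly elliptic on $\overline V_j \subset\subset U_j$. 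The strategy for part~(I) is standard freezing-of-coefficients combined with the parameter-dependent a priori estimates available in $\R^{n-1}$: for the frozen constant-coefficient strongly elliptic operator one has, for $\lambda$ in a sector $|\mathrm{Arg}(\lambda)| \leq \phi_0$ with $|\lambda|$ large, the resolvent bound
$$
\sum_{|\alpha| \leq 2} |\lambda|^{1 - |\alpha|/2} \|D^\alpha w\|_{L^p(\R^{n-1})} \leq C \|\lambda w - L_0 w\|_{L^p(\R^{n-1})},
$$
uniformly in $p$ on compact subintervals, and these local estimates patch together through the partition of unity to yield, for the global operator on $\Gamma$,
$$
\sum_{|\alpha| \leq 2} |\lambda|^{1 - |\alpha|/2} \|D^\alpha g\|_{L^p(\Gamma)} \leq C \|\lambda g - L g\|_{L^p(\Gamma)}
$$
once $|\lambda| \geq R(\phi_0)$ is large enough to absorb the commutator terms coming from differentiating the cutoffs (these are lower order and carry a negative power of $|\lambda|$). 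This a priori estimate gives uniqueness and, together with the method of continuity (Proposition~\ref{pr1.20B}) connecting $L$ to a reference operator such as a shifted Laplace--Beltrami operator whose surjectivity is known, gives existence in $\bigcap_{1 \leq p < \infty} W^{2,p}(\Gamma)$.

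For part~(II) the plan is to run the same localization argument but in the sup-norm / Besov scale rather than $L^p$. In $\R^{n-1}$ the sectorial resolvent of a strongly elliptic operator satisfies the schauder-type estimate
$$
\|w\|_{C^\gamma(\R^{n-1})} \leq C |\lambda|^{-1 + \gamma/2} \|\lambda w - L_0 w\|_{C(\R^{n-1})}, \qquad \gamma \in [0,2),
$$
which is exactly the statement that $\lambda \mapsto (\lambda - L_0)^{-1}$ is a sectorial/bounded analytic resolvent on $C$ with the interpolation spaces $C^\gamma$ as intermediate spaces (again \cite{Lu1}). Localizing and patching as before, and observing that $g$ lies in $W^{2,p}$ for all $p<\infty$ hence in $C^{2-\ve}$ for every $\ve$, one upgrades $g$ to $B^2_{\infty,\infty}(\Gamma)$ by the definition recalled before Lemma~\ref{le1.4} together with part~(II) of Lemma~\ref{le1.4}; the $C^\gamma$ bound with the sharp power $|\lambda|^{-1+\gamma/2}$ follows by interpolating the $\gamma=0$ and the $B^2_{\infty,\infty}$ estimates using Lemma~\ref{le1.4}(III) (the spaces $(C(\Gamma),C^2(\Gamma))_{\theta,\infty} = C^{2\theta}(\Gamma)$ for $\theta \notin \{\tfrac12\}$).

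For part~(III) I would use classical interior Schauder regularity in each chart: once $h \in C^\beta(\Gamma)$, the local representation $\lambda g - L g = h$ with $C^\beta$ coefficients on the principal part and the already-established $g \in C^{2-\ve}$ puts us in the setting where the elliptic Schauder estimate in $\R^{n-1}$ gives $g \in C^{2+\beta}$ locally, and compactness of $\Gamma$ with the finite atlas promotes this to $g \in C^{2+\beta}(\Gamma)$.

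The main obstacle I expect is controlling the commutator terms generated by the partition of unity uniformly with respect to the parameter $\lambda$ across the whole range $1 \le p < \infty$ (and in the $C$/Besov estimates), and, relatedly, handling the fact that (A4) only asserts strong ellipticity and $C^{\beta/2,\beta}$ regularity of the top coefficients on relatively compact subsets $\overline V \subset\subset U$ rather than on all of $U$. Getting a single constant $R(\phi_0)$ and a single $C$ valid for all charts requires first reducing, by the partition of unity, everything to functions supported in such relatively compact $V_j$, so that the available local ellipticity constants $\nu(V_j)$ and Hölder bounds on the coefficients are in force; the negative powers of $|\lambda|$ attached to the commutators are what make the absorption step work, so the care is in tracking those powers rather than in any deep new idea.
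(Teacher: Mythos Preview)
Your plan is broadly correct and shares the essential ingredients with the paper's proof: localize via charts to $\R^{n-1}$, invoke the classical theory from \cite{Lu1}, Chapter~3, and absorb commutator terms using the negative powers of $|\lambda|$ they carry. The chief architectural difference is in the existence step. The paper does not use the method of continuity; it builds an explicit local parametrix $S(x^0,\lambda)$ in each chart (pull back $\phi h$, solve $\lambda v - L^\sharp v = k$ in $\R^{n-1}$, multiply by the cutoff, push forward), then seeks the global solution in the form $g=\sum_j \psi_j S(x_j,\lambda)(\psi_j\tilde h)$ and solves for $\tilde h$ by a Neumann series, since the remainder $\sum_j[\psi_j;L]S(x_j,\lambda)(\psi_j\cdot)$ has $C(\Gamma)$-operator norm $O(|\lambda|^{-1/2})$. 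This is entirely self-contained. Your route via continuity to a shifted Laplace--Beltrami operator is legitimate, but note that justifying surjectivity of that reference endpoint on $\Gamma$ is itself a statement of the same kind as the one you are proving and in practice is established by the very parametrix/Neumann-series construction the paper uses; so you are not saving work, only relocating it.

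One genuine slip in your plan for (II): the chain ``$g\in W^{2,p}$ for all $p<\infty$, hence $g\in C^{2-\ve}$ for all $\ve$, hence $g\in B^2_{\infty,\infty}(\Gamma)$'' fails at the last step; $\bigcap_{\ve>0} C^{2-\ve}$ is strictly larger than $B^2_{\infty,\infty}$. The paper obtains $g\in B^2_{\infty,\infty}$ from a Besov-space resolvent estimate in $\R^{n-1}$ (quoting \cite{Gu1}, Proposition~2.5, via the embedding $C\hookrightarrow B^0_{\infty,\infty}$), not from Sobolev embedding. Likewise, you do not need the interpolation detour for the $C^\gamma$ bound: the $\R^{n-1}$ estimate $\|v\|_{C^\gamma}\leq C|\lambda|^{-1+\gamma/2}\|k\|_C$ that you already quote transfers directly to $\Gamma$ through the parametrix together with the Neumann-series bound $\|\tilde h\|_{C(\Gamma)}\leq 2\|h\|_{C(\Gamma)}$, which is exactly how the paper argues. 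For (III) the paper's argument is close to yours but slightly sharper: it observes that the Neumann remainder lies in $\bigcap_{\gamma<1}C^\gamma(\Gamma)$, so $\tilde h\in C^\beta(\Gamma)$ whenever $h\in C^\beta(\Gamma)$, and then applies the $C^{2+\beta}$ regularity of the local parametrix.
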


\begin{proof} We take an arbitrary $x^0 \in \Gamma$ and consider a local chart $(U, \Phi)$ around $x^0$, with $U$ open subset of $\Gamma$ and $\Phi$ diffeomorphism between $U$ and $\Phi(U)$, open subset in $\R^{n-1}$. We introduce in $\Phi(U)$
the strongly elliptic operator $B^\sharp$, 
$$
L^\sharp v(y) := L(v \circ \Phi)(\Phi^{-1}(y)), \quad y \in \Phi(U). 
$$
By shrinking $U$ (if necessary), we may assume that the coefficients of $L^\sharp$ are in $C^\beta(\Phi(U))$ and are extensible to elements $l_\beta$ in $C^\beta(\R^n)$, in such a way that the operator which we continue to call $L^\sharp = \sum_{|\alpha| \leq 2}  l_\beta (y) D_y^\beta$ is strongly elliptic in $\R^n$. Now we consider the problem
\begin{equation}\label{eq2.3}
\lambda v(y) - L^\sharp v(y) = k(y), \quad y \in \R^{n-1}. 
\end{equation}
Then, (see Chapter 3 in \cite{Lu1}), for every $\phi_0 \in [0, \pi)$ there exist  $R_1(\phi_0) > 0$ such that, if $|Arg(\lambda)| \leq \phi_0$ and $|\lambda| \geq R_1(\phi_0)$, (\ref{eq2.3}) has a unique solution $v$ in $\bigcap_{1 \leq p < \infty} W^{2,p}_{loc}(\R^{n-1})$; 
$v$ belongs also to the Besov space $B^2_{\infty,\infty}(\R^{n-1})$ (see \cite{Gu1}, Proposition 2.5, on account of the embedding $C(\R^{n-1}) \hookrightarrow B^0_{\infty,\infty}(\R^{n-1})$) ; moreover, for every $\gamma \in [0, 2)$ there exists $C > 0$, depending on $\phi_0$ and $\gamma$, such that
$$
\|v\|_{C^\gamma(\R^{n-1})} \leq C_1(\phi_0,\gamma) |\lambda|^{-1+ \gamma/2} \|k\|_{C(\R^{n-1})}; 
$$
finally,  if $k \in C^\beta(\R^{n-1})$, $v \in C^{2+\beta}(\R^{n-1})$. Now we fix $U_1$ open subset of $U$, with $\overline {U_1}$ contained in $U$, $x^0 \in U_1$ and $\phi \in C^{2+\beta}(\Gamma)$, with compact support in $U$, $\phi(x) = 1$ $\forall x \in U_1$. For every $h \in C(\Gamma)$, we indicate with $k$ the trivial extension of $(\phi h) \circ \Phi^{-1}$ to $\R^{n-1}$. If $\lambda$ is such that (\ref{eq2.3}) is uniquely solvable for every $k$ in $C(\R^{n-1})$, we set 
\begin{equation}
[S(x^0, \lambda) h](x) := \phi(x) v(\Phi(x)), \quad x \in \Gamma, 
\end{equation}
with $v$ solving (\ref{eq2.3}). We observe that

$(\alpha_1)$ : $S(x^0, \lambda) h \in \cap_{1\leq p < \infty} W^{2,p}(\Gamma) \cap B^2_{\infty,\infty}(\Gamma)$; 

$(\alpha_2)$: for every $\phi_0 \in [0, \pi)$, $\forall \gamma \in [0, 2)$, there exists $ C_2(\phi_0,\gamma)$ such that, if $|Arg(\lambda)| \leq \phi_0$ and $|\lambda| \geq R_1(\phi_0)$,  
$$
\|S(x^0, \lambda) h\|_{C^\gamma(\Gamma)} \leq C_2(\phi_0,\gamma) |\lambda|^{-1+\gamma/2} \|h\|_{C(\Gamma)};
$$

$(\alpha_3)$: $(\lambda - L) S(x^0, \lambda) h = h$ in $U_1$; 

$(\alpha_4)$: if (\ref{eq2.2A}) is satisfied, for some $g \in \cap_{1 \leq p < \infty}W^{2,p}(\Gamma)$, $h \in C(\Gamma)$ and $g$ vanishes outside $U_1$, then $g = S(x^0, \lambda) h$.

In  fact, the trivial extension of $g \circ \Phi^{-1}$ solves (\ref{eq2.3}), with $k$ trivial extension of $h \circ \Phi^{-1}$. 

$(\alpha_5)$: If $h \in C^\beta(\Gamma)$, $S(x^0, \lambda) h  \in C^{2+\beta}(\Gamma)$. 

Now we fix, for every $x \in \Gamma$, neighbourhoods $U(x)$, $U_1(x)$ of $x$ as before. As $\Gamma$ is compact, there exist $x_1, \dots, x_N$ in $\Gamma$ such that $\Gamma = \cup_{j=1}^N U_1(x_j)$. 

Let $\phi_0 \in [0, \pi)$, $\lambda \neq 0$, $|Arg(\lambda)|\leq \phi_0$. We show that, if $g \in \cap_{1 \leq p < \infty}W^{2,p}(\Gamma)$, it solves (\ref{eq2.2A}) with $h \equiv 0$ and $|\lambda|$ is sufficiently large, then $g \equiv 0$. In fact, let $(\phi_j)_{j=1}^N$ be a $C^{2+\beta}-$
partition of unity in $\Gamma$, with $supp(\phi_j) \subseteq U_1(x_j)$, for each $j \in \{1, \dots, N\}$. Observe that
$$
(\lambda - L) (\phi_j g) = [\phi_j; L] g, 
$$
where we have indicated with $[\phi_j; L]$ the commutator $\phi_j L - L (\phi_j \cdot)$, which is a differential operator of order one. As $(\phi_j g)(x) = 0$ outside $U_1(x_j)$, we deduce from $(\alpha_4)$, if $|\lambda|$ is sufficiently large, 
$$
\phi_j g = S(x_j, \lambda)([\phi_j; L] g). 
$$
So, from $(\alpha_2)$, 
$$
\|g\|_{C^1(\Gamma)} \leq \sum_{j=1}^N \|\phi_j g\|_{C^1(\Gamma)} \leq C_1 |\lambda|^{-1/2} \|[\phi_j; L] g\|_{C(\Gamma)} \leq C_2  |\lambda|^{-1/2} \|g\|_{C^1(\Gamma)}, 
$$
implying $g \equiv 0$ if $|\lambda|$ is sufficiently large. 

Next, we show that, if $|\lambda|$ is large enough,(\ref{eq2.2A}) is solvable for every $h \in C(\Gamma)$. This time we fix, for each $j \in \{1, \dots, N\}$, $\psi_j \in C^{2+\beta} (\Gamma)$, vanishing outside $U_1(x_j)$ and such that $\sum_{j=1}^N \psi_j(x)^2 = 1$ $\forall x \in \Gamma$. We look for $g$ in the form
$$
g = \sum_{j=1}^N \psi_j S(x_j, \lambda)(\psi_j \tilde h),
$$
for some $\tilde h \in C(\Gamma)$. Again observing that $\psi_j S(x_j, \lambda)(\psi_j \tilde h)$ vanishes outside $U_1(x_j)$ and that 
$$
(\lambda - L) [\psi_j S(x_j, \lambda)(\psi_j \tilde h)] = \psi_j^2 \tilde h + [\psi_j; L] [S(x_j, \lambda)(\psi_j \tilde h)], 
$$
we deduce 
$$
(\lambda - L) g = \tilde h + \sum_{j=1}^N [\psi_j; L] [S(x_j, \lambda)(\psi_j \tilde h)]. 
$$
So, we have to choose $\tilde h$ in such a way that 
\begin{equation}\label{eq2.5}
\tilde h + \sum_{j=1}^N [\psi_j; L] [S(x_j, \lambda)(\psi_j \tilde h)] = h. 
\end{equation}
This is uniquely possible if $|\lambda|$ is sufficiently large, because
$$
\|\sum_{j=1}^N [\psi_j; L] [S(x_j, \lambda)(\psi_j \tilde h)]\|_{C(\Gamma)} \leq C_0 \sum_{j=1}^N \|S(x_j, \lambda)(\psi_j \tilde h)]\|_{C^1(\Gamma)} \leq C_1 |\lambda|^{-1/2} \|\tilde h\|_{C(\Gamma)}.
$$
So, if $C_1 |\lambda|^{-1/2} \leq \frac{1}{2}$, we deduce from (\ref{eq2.5}) 
$$
\|\tilde h\|_{C(\Gamma)} \leq 2 \|h\|_{C(\Gamma)}, 
$$
which, together with $(\alpha_2)$, implies (I)-(II). 

Finally, if $h \in C^\beta(\Gamma)$, as $\sum_{j=1}^N [\psi_j; L] [S(x_j, \lambda)(\psi_j \tilde h) \in \cap_{\gamma < 1} C^\gamma(\Gamma)$, $\tilde h \in C^\beta(\Gamma)$, so that $g \in C^{2+\beta}(\Gamma)$ by $(\alpha_5)$. 

\end{proof}

As a simple consequence of Theorem \ref{th2.1}, we deduce the following

\begin{proposition} \label{pr2.2}
Suppose that (A1) and (A4) hold. Define the following operator $\mathcal L$: 
$$
\left\{\begin{array}{l}
D(\mathcal L) = \{g \in \cap_{1 \leq p < \infty} W^{2,p}(\Gamma) : Lg \in C(\Gamma)\}, \\ \\
\mathcal L g = Lg. 
\end{array}
\right.
$$
Then: 

(I) $D(\mathcal L) \subseteq B^2_{\infty, \infty} (\Gamma)$; 

(II) $\mathcal L$ is the infinitesimal generator of an analytic semigroup in $C(\Gamma)$; 

(III) $\forall \theta \in (0, 1) \setminus \{\frac{1}{2}\}$ the real interpolation space $(C(\Gamma), D(\mathcal L))_{\theta,\infty}$ coincides with the space $C^{2\theta}(\Gamma)$, with equivalent norms.
\end{proposition}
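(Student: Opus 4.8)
All three assertions should drop out of Theorem \ref{th2.1} together with the interpolation facts in Lemma \ref{le1.4}, so I would keep everything short. For (I), I would take $g \in D(\mathcal L)$, observe that $g \in C(\Gamma)$ by Sobolev embedding and that $Lg = \mathcal L g \in C(\Gamma)$ by definition of the domain, and fix a real $\lambda_0 \geq R(0)$. Then $h := \lambda_0 g - Lg \in C(\Gamma)$ and $g$ solves $\lambda_0 g - Lg = h$ inside $\cap_{1 \leq p < \infty} W^{2,p}(\Gamma)$; by the uniqueness in Theorem \ref{th2.1}(I) this $g$ coincides with the solution constructed there, and Theorem \ref{th2.1}(II) then forces $g \in B^2_{\infty,\infty}(\Gamma)$.

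For (II) I would first record that $\mathcal L$ is densely defined and closed. Density holds because $C^\infty(\Gamma) \subseteq C^{2+\beta}(\Gamma) \subseteq D(\mathcal L)$ — every $g \in C^{2+\beta}(\Gamma)$ lies in all $W^{2,p}(\Gamma)$ and has $Lg \in C^\beta(\Gamma) \subseteq C(\Gamma)$ — and $C^\infty(\Gamma)$ is dense in $C(\Gamma)$; closedness is the usual consequence of having a bounded resolvent. The essential point is sectoriality: fixing $\phi_0 \in (\pi/2, \pi)$, Theorem \ref{th2.1}(I) makes $\lambda - L : D(\mathcal L) \to C(\Gamma)$ a bijection whenever $|Arg(\lambda)| \leq \phi_0$ and $|\lambda| \geq R(\phi_0)$, so each such $\lambda$ lies in $\rho(\mathcal L)$, while Theorem \ref{th2.1}(II) with $\gamma = 0$ supplies $\|(\lambda - \mathcal L)^{-1}\|_{\mL(C(\Gamma))} \leq C|\lambda|^{-1}$. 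A resolvent bound of this type on a sector of half-opening larger than $\pi/2$ is exactly the criterion (see \cite{Lu1}) for $\mathcal L$ to generate an analytic semigroup on $C(\Gamma)$.

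For (III) I would run a squeezing argument between the two interpolation identities of Lemma \ref{le1.4}(III). On the upper side, part (I) together with the closed graph theorem yields a continuous embedding $D(\mathcal L) \hookrightarrow B^2_{\infty,\infty}(\Gamma)$; combining it with $C(\Gamma) \hookrightarrow B^0_{\infty,\infty}(\Gamma)$ from Lemma \ref{le1.4}(I) and the monotonicity of real interpolation gives
$$(C(\Gamma), D(\mathcal L))_{\theta,\infty} \hookrightarrow (B^0_{\infty,\infty}(\Gamma), B^2_{\infty,\infty}(\Gamma))_{\theta,\infty} = C^{2\theta}(\Gamma).$$
On the lower side, $C^2(\Gamma) \hookrightarrow D(\mathcal L)$ continuously — for $g \in C^2(\Gamma)$ one has $g \in \cap_{1 \leq p < \infty} W^{2,p}(\Gamma)$ and $\|Lg\|_{C(\Gamma)} \leq C\|g\|_{C^2(\Gamma)}$ — so monotonicity again yields
$$C^{2\theta}(\Gamma) = (C(\Gamma), C^2(\Gamma))_{\theta,\infty} \hookrightarrow (C(\Gamma), D(\mathcal L))_{\theta,\infty}.$$
The two embeddings produce the claimed equality with equivalent norms.

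The only genuinely delicate point is this last part. The domain $D(\mathcal L)$ is merely trapped between $C^2(\Gamma)$ and $B^2_{\infty,\infty}(\Gamma)$, two distinct spaces, and the argument works only because Lemma \ref{le1.4}(III) tells us they interpolate to the same H\"older space $C^{2\theta}(\Gamma)$ when $2\theta \notin \Z$; this is also exactly why $\theta = \frac12$ must be excluded, since $B^1_{\infty,\infty}(\Gamma) \neq C^1(\Gamma)$. Moreover, since Theorem \ref{th2.1} only delivers $C^\gamma$ estimates for $\gamma < 2$ and not a quantitative $B^2_{\infty,\infty}$ bound, the upper embedding has to be obtained abstractly via the closed graph theorem rather than by a direct inequality.
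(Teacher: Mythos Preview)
Your proof is correct and follows essentially the same approach as the paper: parts (I) and (II) are drawn directly from Theorem \ref{th2.1}, and part (III) is exactly the paper's squeezing chain $C^{2\theta}(\Gamma) = (C(\Gamma), C^2(\Gamma))_{\theta,\infty} \hookrightarrow (C(\Gamma), D(\mathcal L))_{\theta,\infty} \hookrightarrow (B^0_{\infty,\infty}(\Gamma), B^2_{\infty,\infty}(\Gamma))_{\theta,\infty} = C^{2\theta}(\Gamma)$ via Lemma \ref{le1.4}. Your version is simply more explicit than the paper's terse ``immediately follow from Theorem \ref{th2.1}'', and your closed-graph argument for the continuity of $D(\mathcal L) \hookrightarrow B^2_{\infty,\infty}(\Gamma)$ is a legitimate way to supply a detail the paper leaves implicit.
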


\begin{proof} 
(I) and (II) immediately follow from Theorem \ref{th2.1}. We observe also that  $D(\mathcal L)$ contains $C^2(\Gamma)$ and so it is dense in $C(\Gamma)$. 

Concerning (III), we have, on account of Lemma \ref{le1.4}, 
$$
C^{2\theta}(\Gamma) = (C(\Gamma), C^2(\Gamma))_{\theta,\infty} \subseteq (C(\Gamma), D(\mathcal L))_{\theta,\infty} \subseteq (B^0_{\infty,\infty}(\Gamma), B^2_{\infty,\infty}(\Gamma))_{\theta,\infty} = 
B^{2\theta}_{\infty,\infty}(\Gamma) \subseteq C^{2\theta}(\Gamma). 
$$

\end{proof}

Now we are able to study the parabolic system (\ref{eq2.2}): 

\begin{proposition}\label{pr2.3}
 Suppose that (A1) and (A4) hold. Then the following conditions are necessary and sufficient, in order that (\ref{eq2.2}) have a unique solution $g$ in $C^{1+\beta/2, 2+\beta}((0, T) \times \Gamma)$:

(a) $h \in C^{\beta/2,\beta}((0, T) \times \Gamma)$;

(b) $g_0 \in C^{2+\beta}(\Gamma)$. 

\end{proposition}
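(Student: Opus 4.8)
The plan is to deduce this parabolic maximal regularity result from the abstract theory of analytic semigroups and optimal H\"older regularity, using the operator $\mathcal L$ built in Proposition \ref{pr2.2}. Having established in Proposition \ref{pr2.2} that $\mathcal L$ generates an analytic semigroup in $C(\Gamma)$ and that $(C(\Gamma), D(\mathcal L))_{\theta,\infty} = C^{2\theta}(\Gamma)$ for $\theta \in (0,1)\setminus\{\tfrac12\}$, I would reinterpret (\ref{eq2.2}) as the abstract Cauchy problem
\begin{equation}
g'(t) = \mathcal L g(t) + h(t), \quad t \in (0,T), \qquad g(0) = g_0,
\end{equation}
in the Banach space $X := C(\Gamma)$, where $h(t) := h(t,\cdot)$ and $g(t) := g(t,\cdot)$. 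The strategy is then to apply the known abstract characterisation of maximal regularity of type $C^{\beta/2}$ for such problems (as in Lunardi \cite{Lu1}, the DPL/Sinestrari theory), which states that the solution $g$ lies in $C^{1+\beta/2}((0,T);X) \cap C^{\beta/2}((0,T);D(\mathcal L))$ if and only if $h \in C^{\beta/2}((0,T);X)$, $g_0 \in D(\mathcal L)$, and the compatibility condition $\mathcal L g_0 + h(0) \in \overline{D(\mathcal L)}$ holds (which here is automatic since $\overline{D(\mathcal L)} = C(\Gamma)$), together with the interpolation requirement that $\mathcal L g_0 + h(0) \in (C(\Gamma), D(\mathcal L))_{\beta/2,\infty}$.

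The key translation step is to identify the abstract regularity classes with the concrete H\"older-in-space-and-time classes. Using part (III) of Proposition \ref{pr2.2}, the intermediate space $(C(\Gamma), D(\mathcal L))_{\beta/2,\infty}$ is exactly $C^\beta(\Gamma)$; I would use Lemma \ref{le2.7B} and Lemma \ref{le1.4} to match $C^{\beta/2}((0,T);C(\Gamma)) \cap B((0,T);D(\mathcal L))$ with the anisotropic class $C^{\beta/2,\beta}((0,T)\times\Gamma)$ on the data side and $C^{1+\beta/2,2+\beta}((0,T)\times\Gamma)$ on the solution side. Concretely, a solution $g$ in $C^{1+\beta/2,2+\beta}((0,T)\times\Gamma)$ is equivalent, via these lemmas, to $g \in C^{1+\beta/2}((0,T);C(\Gamma)) \cap B((0,T);C^{2+\beta}(\Gamma))$; since $C^{2+\beta}(\Gamma) = B^{2+\beta}_{\infty,\infty}(\Gamma)$ embeds in the maximal regularity class associated to $D(\mathcal L)$ (which is only $B^2_{\infty,\infty}(\Gamma)$-valued), I must be careful to show that the optimal $C^{\beta/2}((0,T);D(\mathcal L))$ regularity together with the elliptic estimate of Theorem \ref{th2.1}(III) upgrades spatial regularity of $g(t)$ from membership in $D(\mathcal L)$ to the full $C^{2+\beta}(\Gamma)$, uniformly in $t$. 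This is where Theorem \ref{th2.1}(III) does the essential work: applying it pointwise in $t$ to $\lambda g(t) - Lg(t) = \lambda g(t) - D_t g(t) + h(t)$ converts $C^\beta(\Gamma)$ control of the right-hand side into $C^{2+\beta}(\Gamma)$ control of $g(t)$.

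For necessity, I would argue in reverse: if $g \in C^{1+\beta/2,2+\beta}((0,T)\times\Gamma)$ solves (\ref{eq2.2}), then $g_0 = g(0,\cdot) \in C^{2+\beta}(\Gamma)$ is immediate (b), and $h = D_t g - Lg$ is a difference of a $C^{\beta/2,\beta}$ function (by Lemma \ref{le2.7B}(IV)--(V), $D_t g \in B((0,T);C^\beta(\Gamma))$ with the right time regularity) and $Lg \in C^{\beta/2,\beta}$ since $g$ has two $C^\beta$-in-space derivatives with $\beta/2$ time regularity, giving (a). The main obstacle I anticipate is not the abstract semigroup input but precisely the identification and bookkeeping of the function spaces: matching the abstract optimal-regularity class $C^{\beta/2}((0,T);D(\mathcal L))$ with the concrete anisotropic Sobolev/H\"older class, handling the non-integer index $\beta/2$ versus the excluded exponent $\tfrac12$ in Lemma \ref{le1.4}(III), and the final elliptic bootstrap from $D(\mathcal L)$-valued regularity to genuine $C^{2+\beta}(\Gamma)$-valued regularity in space. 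Once these identifications are pinned down, the equivalence follows directly from the abstract theorem and the uniqueness is inherited from uniqueness for the abstract Cauchy problem with generator $\mathcal L$.
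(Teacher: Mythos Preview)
Your proposal is correct and follows essentially the same approach as the paper: recast (\ref{eq2.2}) as an abstract Cauchy problem in $C(\Gamma)$ for the generator $\mathcal L$, invoke the optimal H\"older regularity results from \cite{Lu1} (specifically Theorem 4.3.1 and Corollary 4.3.9) together with the interpolation identity $(C(\Gamma),D(\mathcal L))_{\beta/2,\infty}=C^\beta(\Gamma)$ from Proposition \ref{pr2.2}(III), and then use the elliptic estimate of Theorem \ref{th2.1}(III) to upgrade boundedness of $\mathcal L g$ in $C^\beta(\Gamma)$ to boundedness of $g$ in $C^{2+\beta}(\Gamma)$; necessity is handled exactly as you describe via Lemma \ref{le2.7B}. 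Your worry about the excluded exponent $\tfrac12$ is unfounded here, since $\beta\in(0,1)$ forces $\beta/2\in(0,\tfrac12)$.
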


\begin{proof} If  $g \in C^{1+\beta/2, 2+\beta}((0, T) \times \Gamma)$, $g \in  C^{\beta/2}((0, T); C^2(\Gamma))$ and $D_tg \in B([0, T]; C^\beta(\Gamma))$, by Lemma \ref{le2.7B}. So (a) is necessary. (b) is obviously necessary. 

On the other hand, assume that (a) and (b) hold. By Proposition \ref{pr2.2} and Proposition 4.1.2 in \cite{Lu1},   the only possible solution is the mild solution
$$
g(t,\cdot):= e^{t \mathcal L} g_0 + \int_0^t e^{(t -s)\mathcal L} h(s,\cdot) ds,
$$
indicating with  $(e^{t \mathcal L})_{t \geq 0}$ the semigroup generated by $\mL$ in $C(\Gamma)$. By Theorem 4.3.1 in \cite{Lu1}, the following conditions are necessary and sufficient, in order that $g \in C^{1+\beta/2}((0, T); C(\Gamma))$: 
$g_0 \in D(\mathcal L)$, $h \in C^{\beta/2}((0, T); C(\Gamma))$, $\mathcal L g_0 + h(0,\cdot) \in (C(\Gamma), D(\mathcal L))_{\beta/2, \infty}$. The two first conditions clearly follow from (a)-(b). The third follows from the identity 
$(C(\Gamma), D(\mathcal L))_{\beta/2, \infty} = C^\beta(\Gamma)$, by Proposition \ref{pr2.2} (III). Finally, , by Corollary 4.3.9 in \cite{Lu1}, $g$ is bounded with values in $D(\mathcal L)$ and $\mathcal L g$ is bounded with values in $(C(\Gamma), D(\mathcal L))_{\beta/2, \infty} = C^\beta(\Gamma)$ if and only if $g_0 \in D(\mathcal L)$,  $\mathcal L g_0 \in (C(\Gamma), D(\mathcal L))_{\beta/2, \infty} = C^\beta(\Gamma)$, $h \in C((0, T); C(\Gamma)) \cap B([0, T]; (C(\Gamma), D(\mathcal L))_{\beta/2, \infty}$ and this follows again from Proposition \ref{pr2.2} (III). As $\mathcal L g$ is bounded with values in $C^\beta(\Gamma)$, we conclude, by Theorem \ref{th2.1} (III), that $g$ is bounded with values in 
$C^{2+\beta}(\Gamma)$. 
\end{proof} 

\section{The problem}\label{se3}

\setcounter{equation}{0}

We introduce for future reference the following conditions $(A2')$, which is nothing but $(A2)$ in the particular case that $A(t,x,D_x)$ does not depend on $t$: 

\medskip
{(A2') 
$A(x,D_x) = \sum_{|\alpha| \leq 2} a_\alpha(x) D_x^\alpha$, with $a_\alpha \in C^{\beta} (\Omega)$; in case $|\alpha| = 2$, $a_\alpha$ is real valued and there exists $\nu > 0$ such that, $\forall  x \in  \overline \Omega$, 
$\forall \xi \in \R^n$, $\sum_{|\alpha| = 2} a_\alpha(x) \xi^\alpha \geq \nu |\xi|^2$. 

}
\medskip

We recall the following classical result (see \cite{Lu1}, Theorem 5.1.15):

\begin{theorem}\label{th3.1}
Suppose that (A1) and (A2') hold. Then the following conditions are necessary and sufficient, in order that the system
\begin{equation}\label{eq3.1A}
\left\{\begin{array}{ll}
D_tu(t,x) = A(x, D_x) u(t,x) + f(t,x), & (t,x) \in [0, T] \times \Omega, \\ \\
u(t,x') = g(t,x'), & (t,x') \in [0, T] \times \Gamma, \\ \\
u(0,x) = u_0(x), & x \in \Omega. 
\end{array}
\right. 
\end{equation}
have a unique solution $u$ in $C^{1+\beta/2, 2+\beta}((0, T) \times \Omega)$:

(a) $f \in C^{\beta/2,\beta}((0, T) \times \Omega)$; 

(b) $g \in C^{1+\beta/2, 2+\beta}((0, T) \times \Gamma)$; 

(c) $u_0 \in C^{2+\beta}(\Omega)$; 

(d) $u_{0|\Gamma} = g(0,\cdot)$, $A(x', D_x) u_0(x') + f(0,x') = D_tg(0,x')$ $\forall x' \in \Gamma$. 
\end{theorem}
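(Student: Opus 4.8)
The plan is to treat the two implications separately, proving necessity by direct differentiation and sufficiency by reducing to a homogeneous Dirichlet problem that can be attacked with the analytic-semigroup machinery already used in Proposition \ref{pr2.3}. For necessity, suppose $u \in C^{1+\beta/2,2+\beta}((0,T)\times\Omega)$. By Lemma \ref{le2.7B} one has $D_x^\alpha u \in C^{\beta/2,\beta}$ for $|\alpha|\le 2$ and $D_t u \in B((0,T);C^\beta(\Omega))$, and since the coefficients $a_\alpha$ lie in $C^\beta(\Omega)$ it follows that $A(x,D_x)u \in C^{\beta/2,\beta}((0,T)\times\Omega)$; hence $f = D_t u - A(x,D_x)u \in C^{\beta/2,\beta}$, which is (a). The trace $g := u_{|\Gamma}$ inherits the regularity $C^{1+\beta/2,2+\beta}((0,T)\times\Gamma)$ by the trace theorem on $\Gamma$, giving (b); evaluating at $t=0$ gives $u_0 = u(0,\cdot) \in C^{2+\beta}(\Omega)$, which is (c). Finally, restricting the first equation to $\{0\}\times\Gamma$ and using $D_t u_{|\Gamma} = D_t g$ yields the second identity in (d), the first being the trace of $u_{|\Gamma}=g$ at $t=0$.

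For sufficiency I would first remove the inhomogeneous boundary datum. Using a bounded extension operator I would lift $g$ to a function $w \in C^{1+\beta/2,2+\beta}((0,T)\times\Omega)$ with $w_{|\Gamma}=g$, and set $v := u - w$. Then $v$ must solve the homogeneous Dirichlet problem with forcing $\tilde f := f + A(x,D_x)w - D_t w \in C^{\beta/2,\beta}$, initial datum $v_0 := u_0 - w(0,\cdot) \in C^{2+\beta}(\Omega)$ with $v_{0|\Gamma}=0$; moreover the two parts of (d) transform into $v_{0|\Gamma}=0$ and the single corner compatibility $\bigl(A(x,D_x)v_0 + \tilde f(0,\cdot)\bigr)_{|\Gamma}=0$. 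It then suffices to solve this reduced problem in $C^{1+\beta/2,2+\beta}$.

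For the reduced problem I would introduce the Dirichlet realization $A_D$ of $A$ in $C(\overline\Omega)$, with $D(A_D)=\{z \in \bigcap_{1\le p<\infty}W^{2,p}(\Omega): A z \in C(\overline\Omega),\ z_{|\Gamma}=0\}$. Exactly as in Theorem \ref{th2.1}, by localisation, flattening of $\Gamma$ and freezing of coefficients one reduces to constant-coefficient problems in a half-space, for which it is classical (see \cite{Lu1}, Chapter 3) that $A_D$ generates an analytic semigroup on $C(\overline\Omega)$ and that the elliptic Schauder estimate holds: $z \in D(A_D)$ with $A z \in C^\beta(\overline\Omega)$ implies $z \in C^{2+\beta}(\overline\Omega)$ with norm control. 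The only candidate is then the mild solution $v(t) = e^{tA_D}v_0 + \int_0^t e^{(t-s)A_D}\tilde f(s,\cdot)\,ds$ (Proposition 4.1.2 in \cite{Lu1}), and I would apply the abstract optimal-regularity results (Theorem 4.3.1 and Corollary 4.3.9 in \cite{Lu1}), just as in Proposition \ref{pr2.3}, to obtain $v \in C^{1+\beta/2}((0,T);C(\overline\Omega))$ with $A_D v \in B((0,T);C^\beta(\overline\Omega))$; the Schauder estimate then upgrades this to $v \in B((0,T);C^{2+\beta}(\overline\Omega))$, whence $v \in C^{1+\beta/2,2+\beta}$ by Lemma \ref{le2.7B}(III).

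The hard part will be the matching at the corner $\{0\}\times\Gamma$. Since $D(A_D)$ and the interpolation spaces $(C(\overline\Omega),D(A_D))_{\beta/2,\infty}$ encode the Dirichlet condition, whereas the forcing $\tilde f$ need not vanish on $\Gamma$, one cannot feed $\tilde f$ naively into the abstract theory; the compatibility $\bigl(A(x,D_x)v_0 + \tilde f(0,\cdot)\bigr)_{|\Gamma}=0$ is exactly what is needed to place $A_D v_0 + \tilde f(0,\cdot)$ in the correct interpolation class, and it is the elliptic Schauder estimate — rather than the interpolation characterisation alone — that converts the $C^\beta$-bound on $A_D v$ into the $C^{2+\beta}$-bound on $v$. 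Establishing the generation property and this Schauder estimate up to the boundary is the technical core, and is precisely the content imported from \cite{Lu1}.
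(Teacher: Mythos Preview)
The paper does not prove this theorem at all: it is introduced with ``We recall the following classical result (see \cite{Lu1}, Theorem 5.1.15)'' and is simply quoted without proof. So there is nothing to compare your argument against; you have written a proof where the paper only gives a citation.

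That said, your sketch is essentially the standard route one finds in \cite{Lu1}, Chapter~5: reduce to homogeneous Dirichlet data by subtracting an extension of $g$, invoke the Dirichlet realisation $A_D$ in $C(\overline\Omega)$ as generator of an analytic semigroup, and apply the abstract maximal-regularity theorems (4.3.1 and Corollary 4.3.9) together with the elliptic Schauder estimate. Your identification of the delicate point is also correct: $D(A_D)$ is not dense in $C(\overline\Omega)$ (its closure is $\{v\in C(\overline\Omega): v_{|\Gamma}=0\}$), and the interpolation spaces $(C(\overline\Omega),D(A_D))_{\theta,\infty}$ carry the Dirichlet condition, so one cannot apply the abstract theorems to $\tilde f$ directly. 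In Lunardi's treatment this is handled by a further splitting (subtracting the solution of an elliptic Dirichlet problem with datum $\tilde f(0,\cdot)$, say), and the compatibility in (d) is exactly what makes the remaining data land in the right interpolation class. Your last paragraph gestures at this but does not carry it out; if you want a self-contained proof rather than a pointer to \cite{Lu1}, that splitting step would need to be made explicit.
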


As a consequence, we have the following simple

\begin{theorem}\label{th3.2}
Suppose that (A1), (A2'), (A4') hold. Consider the system
\begin{equation}\label{eq3.1}
\left\{\begin{array}{ll}
D_tu(t,x) = A(x, D_x) u(t,x) + f(t,x), & (t,x) \in [0, T] \times \Omega, \\ \\
D_tu(t,x')  - L u(t,x') = h(t,x'), & (t,x') \in [0, T] \times \Gamma, \\ \\
u(0,x) = u_0(x), & x \in \Omega. 
\end{array}
\right. 
\end{equation}
Then the following conditions are necessary and sufficient, in order that (\ref{eq3.1}) have a unique solution $u$ in $C^{1+\beta/2, 2+\beta}((0, T) \times \Omega)$:

(a) $f \in C^{\beta/2,\beta}((0, T) \times \Omega)$; 

(b) $h \in C^{\beta/2,\beta}((0, T) \times \Gamma)$; 

(c) $u_0 \in C^{2+\beta}(\Omega)$; 

(d) $A(x', D_x) u_0(x') + f(0,x') = L(u_{0|\Gamma})(x') + h(0,x')$ $\forall x' \in \Gamma$. 
\end{theorem}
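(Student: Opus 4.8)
The plan is to exploit the fact that the boundary line of (\ref{eq3.1}) is decoupled from the interior. Writing everything in terms of the trace $g := u_{|\Gamma}$ and using the convention $Lu(t,x') = L(u(t,\cdot)_{|\Gamma})(x') = Lg(t,x')$, the second equation of (\ref{eq3.1}) is precisely the intrinsic parabolic problem (\ref{eq2.2}) on $\Gamma$, namely $D_t g - Lg = h$ with $g(0,\cdot) = u_{0|\Gamma}$, which is governed by Proposition \ref{pr2.3}; once $g$ is known, the first equation is the Dirichlet problem (\ref{eq3.1A}) in $\Om$, governed by Theorem \ref{th3.1}. The whole argument then reduces to matching the hypotheses and, above all, the compatibility conditions of these two already established results.

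For necessity, suppose $u \in C^{1+\beta/2,2+\beta}((0,T)\times\Om)$ solves (\ref{eq3.1}). By local charts and the $C^{2+\beta}$ regularity of $\Gamma$, the trace $g = u_{|\Gamma}$ lies in $C^{1+\beta/2,2+\beta}((0,T)\times\Gamma)$. From $f = D_t u - A(x,D_x)u$, together with Lemma \ref{le2.7B} (which shows $D_t u$ and $A(x,D_x)u$ both belong to $C^{\beta/2,\beta}((0,T)\times\Om)$), I obtain (a). Since $g$ solves $D_t g - Lg = h$ on $\Gamma$, the necessity part of Proposition \ref{pr2.3} yields (b), while (c) is immediate from $u_0 = u(0,\cdot)$. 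For (d) I would evaluate both lines at $t=0$ on $\Gamma$: the interior equation gives $D_t g(0,x') = A(x',D_x)u_0(x') + f(0,x')$ (the spatial trace commuting with $D_t$ for functions of this regularity), while the boundary equation gives $D_t g(0,x') = L(u_{0|\Gamma})(x') + h(0,x')$; equating the two right-hand sides produces (d).

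For sufficiency, assume (a)--(d). First I solve the boundary problem: since $u_{0|\Gamma}\in C^{2+\beta}(\Gamma)$ by (c) and $h\in C^{\beta/2,\beta}((0,T)\times\Gamma)$ by (b), Proposition \ref{pr2.3} gives a unique $g\in C^{1+\beta/2,2+\beta}((0,T)\times\Gamma)$ with $D_t g - Lg = h$ and $g(0,\cdot)=u_{0|\Gamma}$. The key observation is that this $g$ automatically satisfies $D_t g(0,x') = L(u_{0|\Gamma})(x') + h(0,x')$, which by (d) equals $A(x',D_x)u_0(x')+f(0,x')$. Hence all four hypotheses of Theorem \ref{th3.1} hold for (\ref{eq3.1A}) with boundary datum $g$: (a) supplies its (a), the constructed $g$ its (b), (c) its (c), and the pair $u_{0|\Gamma}=g(0,\cdot)$ together with $A(x',D_x)u_0(x')+f(0,x')=D_t g(0,x')$ its (d). Theorem \ref{th3.1} then produces a unique $u\in C^{1+\beta/2,2+\beta}((0,T)\times\Om)$ solving the interior equation with $u_{|\Gamma}=g$ and $u(0,\cdot)=u_0$; and since $u_{|\Gamma}=g$ one checks $D_t u - Lu = D_t g - Lg = h$ on $\Gamma$, so $u$ solves (\ref{eq3.1}).

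For uniqueness, if $u_1,u_2$ both solve (\ref{eq3.1}), their traces solve the same boundary problem and coincide by uniqueness in Proposition \ref{pr2.3}; then $u_1,u_2$ solve the same Dirichlet problem and coincide by uniqueness in Theorem \ref{th3.1}. Once the decoupling is seen the argument is essentially bookkeeping; the only genuinely delicate points I expect are that the trace operator maps $C^{1+\beta/2,2+\beta}((0,T)\times\Om)$ into $C^{1+\beta/2,2+\beta}((0,T)\times\Gamma)$ (resting on the $C^{2+\beta}$ smoothness of $\Gamma$ and the anisotropic Hölder machinery of Lemma \ref{le2.7B}) and that the time trace at $t=0$ commutes with the spatial trace on $\Gamma$, which is what aligns condition (d) here with condition (d) of Theorem \ref{th3.1}. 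Neither is a serious obstacle, which is exactly why the statement is a \emph{simple} consequence of Theorem \ref{th3.1}.
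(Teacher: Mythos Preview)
Your proof is correct and follows essentially the same route as the paper: decouple the system into the intrinsic parabolic problem on $\Gamma$ (handled by Proposition~\ref{pr2.3}) and the Dirichlet problem in $\Omega$ (handled by Theorem~\ref{th3.1}), then verify that condition~(d) is precisely what aligns the compatibility requirements of the two subproblems. The paper's proof is terser but structurally identical; your added remarks on the trace mapping and the explicit uniqueness argument are reasonable elaborations of steps the paper leaves implicit.
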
 

\begin{proof}

We begin by showing that (a)-(d) are necessary. In fact, (a) and (c) are necessary by Theorem \ref{th3.1}. Moreover, $g:= u_{(0, T) \times \Gamma)}$ should belong to $C^{1+\beta/2, 2+\beta}((0, T) \times \Gamma)$ and satisfy the equation
$$
D_t g(t,x')  - L g(t,x') = h(t,x'), \quad (t,x') \in [0, T] \times \Gamma, 
$$
so that, by Proposition \ref{pr2.3}, (b) is necessary. Finally, $\forall x' \in \Gamma$, 
$$A(x', D_x) u_0(x') + f(0,x') = D_t u(0,x') = L(u_{0|\Gamma})(x') + h(0,x'). $$

Next, we show that (a)-(d) are sufficient. If a solution $u$ with the declared regularity exists, $g:= u_{|(0, T) \times \Gamma}$ belongs to $C^{1+\beta/2, 2+\beta}((0, T) \times \Gamma)$ and satisfies the system (\ref{eq2.2}), with $g_0 = u_{0|\Gamma} \in C^{2+\beta}(\Gamma)$. By Proposition \ref{pr2.3}, there is a unique solution $g$ in $C^{1+\beta/2, 2+\beta}((0, T) \times \Gamma)$. So $u$ must be the solution to (\ref{eq3.1A}).
 The conditions (a)-(d) in Theorem \ref{th3.1} are all satisfied. In fact, for example, $\forall x' \in \Gamma$, 
$$A(x', D_x) u_0(x') + f(0,x') = L(u_{0|\Gamma})(x') + h(0,x') = D_tg(0,x').$$
We conclude that there exists a unique solution $u$ in $C^{1+\beta/2, 2+\beta}((0, T) \times \Omega)$ and this completes the proof. 
\end{proof}

Now we need some estimates of the solution $u$ to (\ref{eq3.1})  in different functional settings. They are very similar to those contained in \cite{Gu2}, Lemma 2.2. We shall indicate $A(x,D_x)$ with $A$. 

\begin{lemma}\label{le4.1}
Assume that (A1), (A2'), (A4') hold, $T_0 \in \R^+$ and $T \leq T_0$. Suppose that $f$, $h$, $u_0$ satisfy conditions (a)-(d) in the statement of Theorem \ref{th3.2}. Let $u$ be the solution in $C^{1+\beta/2,2+\beta}((0, T) \times \Omega)$ 
 of (\ref{eq3.1}). Then: 
 
 (I) there exists $C(A,L,T_0)$ in $\R^+$, such that
$$
\begin{array}{c}
\|u\|_{C^{1+\beta/2,2+\beta}((0, T) \times \Omega))} + \|u\|_{C^{\beta/2}((0, T); C^2(\Omega))} + \|u\|_{C^{\frac{1+\beta}{2}}((0, T); C^1(\Omega))}Ê\\ \\
\leq C(A,L,T_0) (\|f\|_{C^{\beta/2,\beta}((0, T) \times \Omega)} + \|u_0\|_{C^{2+\beta}(\Omega)} + \|h\|_{C^{\beta/2,\beta}((0, T) \times \Gamma)}). 
\end{array}
$$
(II) Suppose that $u_0 = 0$. Then, if $0 \leq \theta \leq 1$, there exists $C(A,L,T_0,\theta)$ in $\R^+$ such that
\begin{equation}\label{eq4.2}
\|u\|_{C^\theta((0, T); C(\Omega))} \leq C(A,L,T_0,\theta) T^{1-\theta} (\|f\|_{C^{\beta/2,\beta}((0, T) \times \Omega)} + \|h\|_{C^{\beta/2,\beta}((0, T) \times \Gamma)}),
\end{equation}
if $0 \leq \xi \leq 2+\beta$, there exists $C(A,L,T_0,\xi)$ in $\R^+$ such that
\begin{equation}\label{eq4.3}
\|u\|_{B((0, T); C^\xi(\Omega))} \leq C(A,L,T_0,\xi) T^{\frac{2+\beta-\xi}{2+\xi}} (\|f\|_{C^{\beta/2,\beta}((0, T) \times \Omega)} + \|h\|_{C^{\beta/2,\beta}((0, T) \times \Gamma)}). 
\end{equation}
(III) Again supposing $u_0 = 0$, there exists $C(A,L, T_0)$ in $\R^+$ such that  
$$
\|u\|_{C^{\beta/2}((0, T); C^1(\Omega))} \leq C(A,L,T_0) T^{1/2} (\|f\|_{C^{\beta/2,\beta}((0, T) \times \Omega)} + \|h\|_{C^{\beta/2,\beta}((0, T) \times \partial \Omega)}).
$$
\end{lemma}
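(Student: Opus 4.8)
The plan is to obtain (I) as an a priori bound from the maximal regularity isomorphism already in hand, and then to extract the explicit powers of $T$ in (II) and (III) by combining the bounds of (I) with the fact that $u(0,\cdot)=u_0$, via interpolation.

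For (I): by Theorem \ref{th3.2}, on the fixed interval $(0,T_0)$ the map sending $u\in C^{1+\beta/2,2+\beta}((0,T_0)\times\Omega)$ to its data $(D_tu-Au,\,(D_tu-Lu)_{|\Gamma},\,u(0,\cdot))$ is a continuous bijection onto the closed subspace of $C^{\beta/2,\beta}((0,T_0)\times\Omega)\times C^{\beta/2,\beta}((0,T_0)\times\Gamma)\times C^{2+\beta}(\Omega)$ cut out by the compatibility relation (d); the open mapping theorem then furnishes a constant $C(A,L,T_0)$ bounding the full norm of $u$ by the norm of the data. To pass from $T_0$ to an arbitrary $T\le T_0$ with the same constant, I would extend $f$ and $h$ from $(0,T)$ to $(0,T_0)$ by a bounded operator preserving the parabolic H\"older norms up to a universal factor and leaving the traces at $t=0$ unchanged (so that (d) continues to hold with the given $u_0$), solve on $(0,T_0)$, identify the restriction with $u$ by uniqueness, and note that restriction to $(0,T)$ does not increase any of the norms in play. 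The two auxiliary norms $\|u\|_{C^{\beta/2}((0,T);C^2(\Omega))}$ and $\|u\|_{C^{(1+\beta)/2}((0,T);C^1(\Omega))}$ are then controlled by the embedding of Lemma \ref{le2.7B}(V).

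For (II) and (III) I set $N:=\|f\|_{C^{\beta/2,\beta}((0,T)\times\Omega)}+\|h\|_{C^{\beta/2,\beta}((0,T)\times\Gamma)}$ and use $u(0,\cdot)=0$. Since $u\in C^1((0,T);C(\Omega))$ with $u(0)=0$, integrating $D_tu$ gives $\|u\|_{C((0,T);C(\Omega))}\le CTN$, while (I) gives $\|u\|_{C^1((0,T);C(\Omega))}\le CN$; the elementary interpolation inequality $C^\theta\in J_\theta(C^0,C^1)$ in time (whose constant is made independent of $T$ by optimizing the split at scale $\|u\|_{C^0}/[u]_{\mathrm{Lip}}$) then produces (\ref{eq4.2}) with the factor $T^{1-\theta}$. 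For the second estimate I interpolate in space, pointwise in $t$: from $C^\xi(\Omega)\in J_{\xi/(2+\beta)}(C(\Omega),C^{2+\beta}(\Omega))$ together with $\|u\|_{B((0,T);C(\Omega))}\le CTN$ and $\|u\|_{B((0,T);C^{2+\beta}(\Omega))}\le CN$, taking the supremum in $t$ yields (\ref{eq4.3}) with a power of $T$ equal to the interpolation weight $1-\xi/(2+\beta)$.

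Finally, (III) follows from Lemma \ref{le2.7B}(V), which places $u$ in $C^{(1+\beta)/2}((0,T);C^1(\Omega))$ with norm $\le CN$ by (I). Because $u$ vanishes in $C^1(\Omega)$ at $t=0$, one has $\|u(t)\|_{C^1}\le CN\,t^{(1+\beta)/2}$ and $\|u(t)-u(s)\|_{C^1}\le CN\,|t-s|^{(1+\beta)/2}$; dividing the latter by $|t-s|^{\beta/2}$ leaves the surplus exponent $\tfrac12$, so both the supremum and the $C^{\beta/2}$-seminorm in time are bounded by $CNT^{1/2}$ for $T\le T_0$, which is (III). The only genuinely delicate point is the uniformity in $T\le T_0$ of the constant in (I): this is exactly where the extension-to-$(0,T_0)$-and-restrict device is needed, the extension operator having to respect both the parabolic H\"older norms and the compatibility relation at $t=0$. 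Once (I) holds with a $T$-independent constant, parts (II) and (III) are routine interpolation arguments exploiting the vanishing initial trace.
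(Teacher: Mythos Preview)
Your proof is correct and follows essentially the same route as the paper. The paper makes the extension step in (I) completely explicit by setting $\tilde f(t,\cdot)=f(T,\cdot)$ and $\tilde h(t,\cdot)=h(T,\cdot)$ for $t\in[T,T_0]$, which preserves the $C^{\beta/2,\beta}$ norms exactly and keeps the compatibility condition at $t=0$; your abstract ``bounded extension operator'' is this map. Parts (II) and (III) in the paper are argued just as you do: integrate $D_tu$ to get the $\theta=0$ case, interpolate in time for general $\theta$, interpolate in space via $C^\xi(\Omega)\in J_{\xi/(2+\beta)}(C(\Omega),C^{2+\beta}(\Omega))$ for (\ref{eq4.3}), and for (III) combine the $C((0,T);C^1(\Omega))$ bound from (II) with the surplus $T^{1/2}$ coming from $[u]_{C^{\beta/2}}\le T^{1/2}[u]_{C^{(1+\beta)/2}}$.
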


\begin{proof} We extend $f$ and $h$ to elements $\tilde f$ and $\tilde h$ in (respectively) $C^{\beta/2, \beta}((0, T_0) \times \Omega)$ and $C^{\beta/2,\beta}$ $((0, T_0) \times \partial \Omega)$: we set
$$
\tilde f(t,x) = \left\{\begin{array}{ll}
f(t,x) & \mbox{ if } 0 \leq t \leq T, x \in \Omega, \\ \\
f(T,x) & \mbox{ if } 0 \leq  T \leq t \leq T_0, x \in \Omega,
\end{array}
\right.
$$
$$
\tilde h(t,x') = \left\{\begin{array}{ll}
h(t,x') & \mbox{ if } 0 \leq t \leq T, x' \in \Gamma, \\ \\
h(T,x') & \mbox{ if } 0 \leq  T \leq t \leq T_0, x' \in \Gamma. 
\end{array}
\right.
$$
We denote with $\tilde u$ the solution to 
$$
\left\{\begin{array}{l}
D_t \tilde u(t,\xi) = A(x,D_x) \tilde u(t,x) + \tilde f(t,x), \quad t \in [0, T_0], x \in \Omega, \\ \\
\tilde u(0, x) = u_0(x), \quad x \in \Omega, \\ \\
D_t \tilde u(t,x') - L \tilde u(t,x') =  \tilde h(t,x'), \quad t \in [0, T_0], x'  \in \Gamma. 
\end{array}
\right.
$$
Clearly, $\tilde u$ is an extension of $u$. So
$$
\begin{array}{c}
\|u\|_{C^{1+\beta/2, 2+\beta}((0, T) \times \Omega)} + \|u\|_{C^{\beta/2}((0, T); C^2 (\Omega))} + \|u\|_{C^{\frac{1+\beta}{2}}((0, T); C^1 (\Omega))}\\ \\
\leq \|\tilde u\|_{C^{1+\beta/2, 2+\beta}((0, T_0) \times \Omega)} + \|\tilde u\|_{C^{\beta/2}((0, T_0); C^2 (\Omega))} + \|\tilde u\|_{C^{\frac{1+\beta}{2}}((0, T_0); C^1 (\Omega))}
  \\ \\
\leq C(A,L,T_0)(\|\tilde f\|_{C^{\beta/2, \beta}((0, T_0) \times \Omega)} + \|u_0\|_{C^{2+\beta}(\Omega)} + \|\tilde h\|_{C^{\beta/2,\beta}((0, T_0) \times \Gamma)}) \\ \\
= C(A,L,T_0)(\|f\|_{C^{\beta/2, \beta}((0, T) \times \Omega)} + \|u_0\|_{C^{2+\beta}(\Omega)} + \|h\|_{C^{\beta/2,\beta}((0, T) \times \Gamma)}). 
\end{array}
$$
So (I) is proved.

Concerning (II), we begin by considering the case $\theta = 0$. If $0 \leq t \leq T$, we have
$$
u(t,\cdot) = \int_0^t D_s u(s,\cdot) ds, 
$$
so that
$$
\begin{array}{c}
\|u(t,\cdot)\|_{C(\Omega)} \leq \int_0^t \|D_s u(s,\cdot)\|_{C(\Omega)} ds \leq t \|u\|_{C^{1+\beta/2}((0, T); C(\Omega))} \\ \\
 \leq C(A,L,T_0) T (\|f\|_{C^{\beta/2, \beta}((0, T) \times \Omega)} + \|h\|_{C^{\beta/2,\beta}((0, T) \times \Gamma)}),
\end{array}
$$
employing (I). The cases $0 < \theta \leq 1$ follow from the foregoing with $\theta = 0$ and the fact that $C^\theta((0, T); C(\Omega)) \in J_\theta(C((0, T); C(\Omega)),C^1((0, T); C(\Omega))$. 
So (\ref{eq4.2}) is proved. (\ref{eq4.3}) follows from (\ref{eq4.2}) with $\theta = 0$, (I) and the fact that $C^\theta(\Omega) \in J_{\theta/(2+\beta)}(C(\Omega), C^{2+\beta}(\Omega))$. 

It remains to consider (III). We have
$$
\begin{array}{c}
\|u\|_{C^{\beta/2}((0, T); C^1(\Omega))} = \max\{\|u\|_{C((0, T); C^1(\Omega))}, [u]_{C^{\beta/2}((0, T); C^1(\Omega))}\}  \\ \\
\leq \|u\|_{C((0, T); C^1(\Omega))}  + T^{1/2} [u]_{C^{(1+\beta)/2}((0, T); C^1(\Omega))},
\end{array}
$$
and the conclusion follows from (I) and (II). 

\end{proof}

\begin{remark}\label{re3.4}
{\rm A revision of the proof of (II)-(III) in Lemma \ref{le4.1} shows that the dependence of the constants $C(A,L,T_0,\theta)$ in (\ref{eq4.2}),  $C(A,L,T_0,\xi)$ in (\ref{eq4.3}) and $C(A,L,T_0)$ in (III) on $A$ and $L$ is through the constant 
$C(A,L,T_0)$ in (I). In other words, if we consider all   operators $A$ and $L$ such that (I) holds with the same constant $C(T_0)$, the constants in (\ref{eq4.2}), (\ref{eq4.3}) and (IV) can be chosen independently of $A$ and $L$. 

}
\end{remark}

\begin{lemma}\label{le3.4}
We consider a system in the form
\begin{equation}\label{eq3.5}
\left\{\begin{array}{ll}
D_tu(t,x) = A(x, D_x) u(t,x) + \A (t) u(t,\cdot) + f(t,x), & (t,x) \in [0, T] \times \Omega, \\ \\
D_t u(t,x') + \mB (t)u(t,x')  - L u(t,x') = h(t,x'), & (t,x') \in [0, T] \times \partial \Omega, \\ \\
u(0,x) = u_0(x), & x \in \Omega. 
\end{array}
\right. 
\end{equation}
with the following assumptions:

(a) (A1), (A2') and (A4') are satisfied; 

(b) $\forall t \in [0, T]$ $\A(t) \in \mL(C^2(\Omega); C(\Omega)) \cap \mL(C^{2+\beta}(\Omega); C^\beta(\Omega))$ and, for certain $\delta$, $M$ in $\R^+$, $\forall u \in C^{2+\beta}(\Omega)$, 
\begin{equation}\label{eq3.6B}
\|\A(t)u\|_{C(\Omega)} \leq \delta \|u\|_{C^2(\Omega)} + M \|u\|_{C^{1}(\Omega)}, 
\end{equation}
\begin{equation}\label{eq3.7B}
\|\A\|_{C^{\beta/2}([0, T]; \mL(C^2(\Omega); C(\Omega))} \leq M, 
\end{equation}
\begin{equation}\label{eq3.8B}
\|\A(t)u\|_{C^\beta(\Omega)} \leq \delta \|u\|_{C^{2+\beta}(\Omega)} + M \|u\|_{C^{2}(\Omega)};
\end{equation}

(c) $\forall t \in [0, T]$ $\mB(t) \in \mL(C^2(\Omega); C(\Gamma)) \cap \mL(C^{2+\beta}(\Omega); C^\beta(\Gamma))$ and, for certain $\delta$, $M$ in $\R^+$, $\forall u \in C^{2+\beta}(\Omega)$, 
\begin{equation}\label{eq3.9B}
\|\mB(t)u\|_{C(\Gamma)} \leq \delta \|u\|_{C^2(\Omega)} + M \|u\|_{C^{1}(\Omega)}, 
\end{equation}
\begin{equation}\label{eq3.10B}
\|\mB\|_{C^{\beta/2}([0, T]; \mL(C^2(\Omega); C(\Gamma))} \leq M, 
\end{equation}
\begin{equation}\label{eq3.11B}
\|\mB(t)u\|_{C^\beta(\Gamma)} \leq \delta \|u\|_{C^{2+\beta}(\Omega)} + M \|u\|_{C^{2}(\Omega)}. 
\end{equation}

Then: 

\medskip

(I) there exists $\delta_0$ in $\R^+$, depending only on $A $ and $L$,  such that, if $\delta \leq \delta_0$, $f \in C^{\beta/2, \beta}((0, T) \times \Omega)$,  $h \in C^{\beta/2, \beta}((0, T) \times \Gamma)$, $u_0 \in C^{2+\beta}(\Omega)$, and 
\begin{equation}\label{eq3.6A}
A(x', D_x) u_0(x')+ \A (0) u_0(x')  + f(0,x') = - \mB (0)u_0(x')  + L (u_{0|\Gamma})(x') + h(0,x'), \quad \forall x' \in \Gamma, 
\end{equation}
(\ref{eq3.5}) has a unique solution $u$ in $C^{1+\beta/2,2+\beta}((0, T) \times \Omega)$. 

(II) Suppose that $T_0 \in \R^+$, (b) and (c) are satisfied, replacing $T$ with $T_0$,  $\delta \leq \delta_0$, with $\delta_0$ as in (I). Take $T \in (0, T_0]$. Then: 

(II) there exists $C(T_0, A,L,\delta, M)$ in $\R^+$, such that
$$
\begin{array}{c}
\|u\|_{C^{1+\beta/2,2+\beta}((0, T) \times \Omega)} + \|u\|_{C^{\beta/2}((0, T); C^2 (\Omega))} + \|u\|_{C^{\frac{1+\beta}{2}}((0, T); C^1 (\Omega))} \\ \\
 \leq C(T_0, A,L,\delta, M) (\|f\|_{C^{\beta/2, \beta}((0, T) \times \Omega)} + \|u_0\|_{C^{2+\beta}(\Omega)} + \|h\|_{C^{\beta/2, \beta}((0, T) \times \Gamma)}). 
\end{array}
$$

(III) Suppose that $u_0 = 0$. Then, if $0 \leq \theta \leq 1$, 
$$
\|u\|_{C^\theta((0, T); C(\Omega))} \leq C(T_0, A,L,\delta, M,\theta) T^{1-\theta} (\|f\|_{C^{\beta/2, \beta}((0, T) \times \Omega)}  + \|h\|_{C^{\beta/2, \beta}((0, T) \times \Gamma)}), 
$$
if $0 \leq \theta \leq 2+\beta$, 
$$
\|u\|_{B((0, T); C^\beta(\Omega))} \leq C(T_0, A,L,\delta, M,\beta) T^{\frac{2+\beta - \theta}{2+\beta}} (\|f\|_{C^{\beta/2, \beta}((0, T) \times \Omega)}  + \|h\|_{C^{\beta/2, \beta}((0, T) \times \Gamma)});
$$
$$
\|u\|_{C^{\beta/2}((0, T); C^1(\Omega))} \leq C(T_0, A,L,\delta, M) T^{1/2} (\|f\|_{C^{\beta/2, \beta}((0, T) \times \Omega)}  + \|h\|_{C^{\beta/2, \beta}((0, T) \times \Gamma)});
$$
\end{lemma}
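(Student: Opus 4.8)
The plan is to treat (\ref{eq3.5}) as a perturbation of the problem solved in Theorem \ref{th3.2} and to produce the solution by a contraction argument, deriving the quantitative bounds (II)--(III) afterwards from Lemma \ref{le4.1} and Remark \ref{re3.4}. Concretely, I would fix a step length $T_1\in(0,T]$ to be chosen and define, on the complete affine metric space $\{v\in C^{1+\beta/2,2+\beta}((0,T_1)\times\Omega):v(0,\cdot)=u_0\}$, the map $\Phi$ sending $v$ to the unique solution of the \emph{unperturbed} system (interior operator $A$, boundary operator $-L$) with interior datum $f+\A(\cdot)v$, boundary datum $h-\mB(\cdot)v$ and initial datum $u_0$. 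The first point is that $\Phi$ is well defined: by assumptions (b) and (c) the terms $\A(\cdot)v$ and $\mB(\cdot)v$ lie in $C^{\beta/2,\beta}$ of the relevant sets, and the compatibility condition (d) of Theorem \ref{th3.2} for the data of $\Phi(v)$ reduces \emph{exactly} to (\ref{eq3.6A}), precisely because $v(0,\cdot)=u_0$; hence $\Phi$ maps this space into itself.

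The heart of the matter is the estimate of the perturbation in the parabolic H\"older norm. I would bound $\|\A(\cdot)v\|_{C^{\beta/2,\beta}((0,T_1)\times\Omega)}$ (and likewise the $\mB$-term on $\Gamma$) by splitting the time increment as $\A(t)v(t)-\A(s)v(s)=\A(t)(v(t)-v(s))+(\A(t)-\A(s))v(s)$: the first piece is controlled through (\ref{eq3.6B}) and (\ref{eq3.8B}), the second through (\ref{eq3.7B}) (respectively (\ref{eq3.9B}), (\ref{eq3.11B}), (\ref{eq3.10B}) on $\Gamma$), reading off the intermediate norms $\|v\|_{C^{\beta/2}((0,T_1);C^2)}$ and $\|v\|_{C^{(1+\beta)/2}((0,T_1);C^1)}$ via Lemma \ref{le2.7B}(V). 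The outcome has the shape $\delta\,c_1\,\|v\|_{C^{1+\beta/2,2+\beta}}+M\,c_2\,\Lambda(v)$, where $\Lambda(v)$ collects only norms of order strictly below the top one. The decisive feature is that the $\delta$-part multiplies the full norm, so it cannot be made small by shrinking $T_1$ and must be absorbed by choosing $\delta$ small, whereas $\Lambda(v)$ will be dominated either by a positive power of $T_1$ (when $v(0,\cdot)=0$) or, via an Ehrling-type interpolation inequality, by $\epsilon\,\|v\|_{C^{1+\beta/2,2+\beta}}+C_\epsilon\,(\|f\|+\|h\|+\|u_0\|)$.

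For existence and uniqueness (I), I would apply this estimate to $w:=\Phi(v_1)-\Phi(v_2)$, which solves the unperturbed system with \emph{zero} initial datum and data $\A(\cdot)(v_1-v_2)$, $-\mB(\cdot)(v_1-v_2)$; since $v_1-v_2$ has zero initial trace, $\Lambda(v_1-v_2)$ is $T_1$-small. Lemma \ref{le4.1}(I) then gives a Lipschitz constant of the form $C(A,L,T_0)\,[\delta\,c+M\,c'\,T_1^{p}]$. I would first fix $\delta_0$ from the ($T$-independent) maximal-regularity seminorm constant of the unperturbed problem, so that the $\delta$-contribution is at most $1/2$; this keeps $\delta_0$ governed by $A$ and $L$ alone. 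Then I choose $T_1$ small (depending on $M,\delta_0,A,L$) so that the $M$-contribution is at most $1/4$, whence $\Phi$ is a contraction and its fixed point is the unique solution on $[0,T_1]$. To reach the full interval I iterate: the local solution lies in $C^{1+\beta/2,2+\beta}$, so subtracting the interior and boundary equations of (\ref{eq3.5}) at $t=jT_1$ shows that the propagated initial value $u(jT_1,\cdot)$ again satisfies (\ref{eq3.6A}); hence the same $\delta_0$ works at every step and finitely many steps of length $T_1$ cover $[0,T]$.

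Finally, the quantitative estimates. For (II) I would read $u$ as the solution of the unperturbed system with data $F=f+\A(\cdot)u$, $H=h-\mB(\cdot)u$, $u_0$, apply Lemma \ref{le4.1}(I) on each subinterval of length $T_1$, absorb the $\delta$-part into the left-hand side (legitimate since $C(A,L,T_0)\,\delta\,c\le 1/2$) and the $M$-part by the interpolation split together with the shortness of $T_1$, and concatenate; the number of steps, and hence the final constant, depends on $M$, matching the stated $C(T_0,A,L,\delta,M)$. For (III) (where $u_0=0$) I would use (II) to bound $\|F\|_{C^{\beta/2,\beta}}$ and $\|H\|_{C^{\beta/2,\beta}}$ by a constant times $\|f\|_{C^{\beta/2,\beta}}+\|h\|_{C^{\beta/2,\beta}}$, and then feed these into parts (II)--(III) of Lemma \ref{le4.1}, whose $T$-power constants are governed by the single constant of Lemma \ref{le4.1}(I) by Remark \ref{re3.4}; this reproduces the factors $T^{1-\theta}$, $T^{(2+\beta-\theta)/(2+\beta)}$ and $T^{1/2}$. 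The main obstacle throughout is the perturbation estimate of the second paragraph: obtaining a clean split into a top-order part carrying the factor $\delta$ (to be beaten by $\delta_0$, which must stay governed by $A$ and $L$ alone) and a strictly lower-order part carrying $M$ (to be beaten by interpolation, short time and iteration), in particular handling the time-H\"older seminorm through (\ref{eq3.7B}) and (\ref{eq3.10B}).
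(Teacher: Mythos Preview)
Your approach is correct and the perturbation estimates you outline are essentially the ones the paper uses, but the existence mechanism differs. The paper does not set up a contraction map; instead, after reducing to $u_0=0$ (your substitution $v=u-u_0$ is the same as the paper's step $(\gamma)$) it proves an \emph{a priori} estimate on short intervals (your second paragraph, with the same splitting of $\A(t)u(t)-\A(s)u(s)$ and the same choice $C(A,L)\delta_0\le 1/2$), and then invokes the \emph{continuation principle} (Proposition \ref{pr1.20B}) along the family $T_\rho u:=(D_tu-Au-\rho\A u,\;D_tu_{|\Gamma}-Lu_{|\Gamma}-\rho\mB u)$, $\rho\in[0,1]$, using Theorem \ref{th3.2} for $\rho=0$. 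Your Banach fixed point argument is an equally standard alternative that trades the homotopy parameter for an explicit iteration map; it has the mild advantage of making the dependence of the step length $T_1$ on $M$ completely transparent, while the paper's route separates cleanly the a priori estimate from the existence step. For (II)--(III) the paper does not iterate over subintervals as you suggest; it extends $f,h,\A,\mB$ by constant values from $[0,T]$ to $[0,T_0]$ and applies the single-interval estimate on $[0,T_0]$, exactly as in the proof of Lemma \ref{le4.1}. Your subinterval-concatenation variant works too but requires a bit more bookkeeping with H\"older seminorms across the junctions; the extension trick avoids that.
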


\begin{proof} We prove the result in several steps.

{\it $(\alpha)$ We consider the case $u_0 = 0$ and prove an a priori estimate of $u$, if $T$ is sufficiently small. }

In this case, (\ref{eq3.6A}) becomes 
$$
f(0,x') =  h(0,x'), \quad \forall x' \in \Gamma.
$$
So suppose that $u \in C^{1+\beta/2,2+\beta}((0, T) \times \Omega)$ solves (\ref{eq3.5}), in case $u_0 = 0$. We set
$$
\A u(t):= \A(t)u(t), \quad \mB u(t):= \mB(t)u(t). 
$$
It is easily seen that $\A u \in C^{\beta/2,\beta}((0, T) \times \Omega)$ and $\mB u \in C^{\beta/2,\beta}((0, T) \times \Gamma)$.
Moreover, if $0 \leq s < t \leq T$, 
$$
\begin{array}{c}
\|\A (t) u(t) - \A (s) u(s)\|_{C(\Omega)} \leq \|\A (t) - \A(s)) u(t)\|_{C(\Omega)} + \|\A (s) (u(t) - u(s))\|_{C(\Omega)} \\ \\
\leq (t-s)^{\beta/2} [M \|u\|_{C((0, T); C^2(\Omega))} + \delta  \|u\|_{C^{\beta/2}((0, T); C^2(\Omega))} + M \|u\|_{C^{\beta/2}((0, T); C^1(\Omega))}],
\end{array}
$$
$$
\|\A u\|_{B((0, T); C^\beta(\Omega))} \leq \delta \|u\|_{B((0, T); C^{2+\beta}(\Omega))} + M  \|u\|_{B((0, T); C^{2}(\Omega))}. 
$$
Analogously, one can show that 
$$
\begin{array}{c}
\|\mB (t) u(t) - \mB (s) u(s)\|_{C(\Gamma)} \\ \\
 \leq 
\ (t-s)^{\beta/2} [M \|u\|_{C((0, T); C^2(\Omega))} + \delta  \|u\|_{C^{\beta/2}((0, T); C^2(\Omega))} + M \|u\|_{C^{\beta/2}((0, T); C^1(\Omega))}],
\end{array}
$$
$$
\|\mB u\|_{B((0, T); C^\beta(\Gamma))} \leq \delta \|u\|_{B((0, T); C^{2+\beta}(\Omega))} + M  \|u\|_{B((0, T); C^{2}(\Omega))}. 
$$
So, from Lemma \ref{le4.1}, if (say) $T \leq 1$, we deduce 
$$
\begin{array}{c}
\|u\|_{C^{1+\beta/2,2+\beta}((0, T) \times \Omega)} + \|u\|_{C^{\beta/2}((0, T); C^2(\Omega))}  +  T^{-\frac{\beta}{2+\beta}} \|u\|_{C((0, T); C^2(\Omega))} +  T^{-\frac{1}{2}} \|u\|_{C^{\beta/2}((0, T); C^1(\Omega))} \\ \\
  \leq C(A,L) [ \|f\|_{C^{\beta/2,\beta}((0, T) \times \Omega)} + \|h\|_{C^{\beta/2,\beta}((0, T) \times \Gamma)} + M (\|u\|_{C((0, T); C^2(\Omega))} + \|u\|_{C^{\beta/2}((0, T); C^1(\Omega))}) \\ \\
  + \delta (\|u\|_{C^{1+\beta/2,2+\beta}((0, T) \times \Omega)} + \|u\|_{C^{\beta/2}((0, T); C^2(\Omega))})]. 
\end{array}
$$
So, if we assume that $\delta \leq \delta_0$ and $T$ is so small that
\begin{equation}\label{eq3.7A}
C(A,B) \delta_0 \leq \frac{1}{2}, C(A,L) M \leq \min\{\frac{1}{2T^{\frac{\beta}{2+\beta}}}, \frac{1}{2T^{1/2}}\}, 
\end{equation}
we obtain the estimate
\begin{equation}\label{eq3.7} 
\|u\|_{C^{1+\beta/2,2+\beta}((0, T) \times \Omega)} \leq 2C(A,L) (\|f\|_{C^{\beta/2,\beta}((0, T) \times \Omega)} + \|h\|_{C^{\beta/2,\beta}((0, T) \times \Gamma)}). 
\end{equation}

\medskip

{\it $(\beta)$ We show that, in case $u_0 = 0$, if $T$ is sufficiently small in such a way that (\ref{eq3.7}) holds, again in case $u_0 = 0$, (\ref{eq3.5}) 
has a unique solution $u \in C^{1+\beta/2,2+\beta}((0, T) \times \Omega)$. 
}

This is a consequence of the continuation principle (see Proposition \ref{pr1.20B}): define 
$$
X_\tau:= \{u \in C^{1+\beta/2,2+\beta}((0, \tau) \times \Omega) : u(0,\cdot) = 0\}
$$
which is a closed subspace of $C^{1+\beta/2,2+\beta}((0, \tau) \times \Omega)$, 
$$
Y_\tau  := \{(f,h) \in C^{\beta/2,\beta}((0, \tau) \times \Omega) \times C^{\beta/2,\beta}((0, \tau) \times \Gamma) : f(0,\cdot)_{|\Gamma} = h(0,\cdot)\}, 
$$
which is a closed subspace of $C^{\beta/2,\beta}((0, \tau) \times \Omega) \times C^{\beta/2,\beta}((0, \tau) \times \Gamma)$. If $\rho \in [0, 1]$, consider the operator $T_\rho : X_\tau \to Y_\tau$, 
$$
T_\rho u := (D_t u - Au - \rho \A u, D_t u_{|(0, T) \times \Gamma} - L (u_{|(0, T) \times \Gamma}) - \rho \mB u). 
$$
$T_0$ is a linear and topological isomorphism between $X_\tau$ and $Y_\tau$, by Theorem \ref{th3.2}. So $(\beta)$ follows from the a priori estimates $(\alpha)$. 

\medskip

{\it $(\gamma)$ We show that, if $T$ is sufficiently small in such a way that (\ref{eq3.7}) holds, if $f \in C^{\beta/2,\beta}((0, T) \times \Omega)$, $h \in C^{\beta/2,\beta}((0, T) \times \Gamma)$, $u_0 \in C^{2+\beta}(\Omega)$ and (\ref{eq3.6A}) holds, \ref{eq3.5}) 
has a unique solution $u \in C^{1+\beta/2,2+\beta}((0, T) \times \Omega)$. }

We take $v(t,x) := u(t,x) - u_0$ as new unknown. $v$ should solve the system 
$$
\left\{\begin{array}{ll}
D_t v(t,x) = A(x, D_x) v(t,x) + \A (t) v(t,\cdot) + f(t,x) + A(x, D_x) u_0(x) + \A (t)u_0, & (t,x) \in [0, T] \times \Omega, \\ \\
D_t v(t,x') + \mB (t)v(t,x')  - L v(t,x') = h(t,x') - \mB (t)u_0 + L(u_{0|\Gamma}) , & (t,x') \in [0, T] \times \partial \Omega, \\ \\
v(0,x) = 0, & x \in \Omega. 
\end{array}
\right. 
$$
to which the conclusion in $(\beta)$ is applicable. 

\medskip

{\it $(\delta)$ Proof of   (I). } 

We begin by showing the uniqueness of a solution. So suppose that $u \in C^{1+\beta/2,2+\beta}((0, T) \times \Omega)$ solves (\ref{eq3.5}) in case $f \equiv  0$, $h \equiv 0$ and $u_0 = 0$. Of course we want to show that $u \equiv 0$. 
Suppose the contrary and set
$$
\tau:= \inf \{t \in [0, T]: u(t,\cdot) \neq 0\}. 
$$
Then $0 \leq \tau < T$ and, by continuity, $u(\tau, \cdot) = 0$. Define $u_1(t, \cdot):= u(\tau + t,\cdot)$. Then $u_1$ is a solution to the system 
$$
\left\{\begin{array}{ll}
D_tu_1(t,x) = A(x, D_x) u_1(t,x) + \A (\tau + t) u_1(t,\cdot), & (t,x) \in [0, T-\tau] \times \Omega, \\ \\
D_t u_1(t,x') + \mB (\tau + t)u_1(t,x')  - L u_1(t,x') = 0, & (t,x') \in [0, T-\tau] \times \partial \Omega, \\ \\
u_1(0,x) = 0, & x \in \Omega. 
\end{array}
\right. 
$$
Clearly, the families of operators $\{\A (\tau+t) : t \in [0, T - \tau]\}$ and $\{\mB (\tau+t) : t \in [0, T - \tau]\}$ satisfy the conditions (b) and (c) with the same constants $\delta$ and $M$. We deduce from $(\beta)$ that, for some $T_1 \in (0, T - \tau]$, $u_{1|(0, T_1) \times \Omega} = 0$, so that $u_{|(0, \tau + T_1) \times \Omega} = 0$, which is in contradiction with the definition of $\tau$. 

We show the existence of a solution. We suppose that $T$ does not satisfy one of the majorities in (\ref{eq3.7A}) and replace it with $\tau \in (0, T)$, satisfying them. So we can apply $(\gamma)$, and get a solution $u_1$ in $[0, \tau] \times \Omega$. We observe that, $\forall x' \in \Gamma$, 
$$
A(x', D_x) u_1(\tau, x')+ \A (\tau) u_1(\tau,x')  + f(\tau,x') = D_tu(\tau,x') = - \mB (\tau)u_1(\tau,x')  + L u_1(\tau,x') + h(\tau,x'), 
$$
and consider the system 
\begin{equation}\label{eq3.9}
\left\{\begin{array}{ll}
D_tu_2(t,x) = A(x, D_x) u_2(t,x) + \A (\tau+t) u_2(t,x) + f(\tau+t,x), & (t,x) \in [0, \tau \wedge (T-\tau)] \times \Omega, \\ \\
D_t u_2(\tau,x') + \mB (\tau+t)u_2(t,x')  - L u_2(t,x') = h(\tau+t,x'), & (t,x') \in [0, \tau \wedge (T-\tau)] \times \Gamma, \\ \\
u_2(0,x) = u_1(\tau,x), & x \in \Omega. 
\end{array}
\right. 
\end{equation}
By $(\gamma)$, (\ref{eq3.9}) has a unique solution $u_2$ in $[0, \tau \wedge (T-\tau)] \times \Omega$. If we set 
$$
u(t,x) = \left\{\begin{array}{lll}
u_1(t,x) & {\rm if } & (t,x) \in [0, \tau] \times \overline \Omega, \\ \\
u_2(t - \tau,x) & {\rm if } & (t,x) \in [\tau, (2\tau) \wedge T] \times \overline \Omega, 
\end{array}
\right.
$$
we can easily check that $u \in C^{1+\beta/2,2+\beta}((0, ((2\tau) \wedge T) \times \Omega)$ and is a solution to (\ref{eq3.5}). In case $2\tau < T$, we can extend $u$ to a solution in $[0, (3\tau) \wedge T] \times \Omega$ and in a finite number of steps we construct a solution in $[0, T] \times \Omega$. 

\medskip

{\it $(\epsilon)$ Proof of (II)-(III)} 

It can be obtained with the same arguments in the proof of Lemma \ref{le4.1}.

\end{proof} 

\begin{remark}\label{re3.6}
{\rm A revision of the proof of Lemma \ref{le3.4}  shows that the dependence of the positive number $\delta_0$ and the constants $C(T_0,A,L,\delta,M,\theta)$ and $C(T_0, A,L,\delta, M,\beta)$ appearing in (I)-(III) on $A$ and $L$ is through the constants appearing in (I)-(III) of the statement of Lemma \ref{le4.1}. So, by Remark \ref{re3.4}, it is through the only constant $C(A,L,T_0)$ appearing in (I) of Lemma \ref{le4.1}. 
}
\end{remark}

\begin{remark}\label{re3.5}
{\rm Lemma \ref{le3.4} is applicable in case
\begin{equation}
\A(t) v(x) = \sum_{|\alpha| \leq 2} r_\alpha (t,x) D_x^\alpha v(x), \quad (t,x) \in [0, T] \times \overline \Omega, 
\end{equation}
\begin{equation}
\mB(t) v(x) =  \sum_{|\alpha| \leq 1} b_\alpha(t,x') D_x^\alpha v(x') + \mL(t) v(x'), 
\end{equation} 
Here $\mL(t)$ is a differential operator in $\Gamma$ of order not exceeding two, which, for some finite subatlas $(U_j, \Phi_j)_{j=1}^N$ of $\Gamma$, can be locally represented in the form 
\begin{equation}
\mL(t) v(x') = \sum_{|\alpha| \leq 2} l_{j\alpha}(t,x') D_y^\alpha (v \circ \Phi_j^{-1}) (\Phi_j(x')), \quad t \in [0, T], x' \in U_j. 
\end{equation}
We suppose that  $r_\alpha \in C^{\beta/2,\beta}((0, T) \times \Omega)$ $(|\alpha| \leq 2)$, $b_\alpha \in C^{\beta/2,\beta}((0, T) \times \Gamma)$ $(|\alpha| \leq 1)$, $l_{j\alpha} \in  C^{\beta/2,\beta}((0, T) \times U_j)$ ($1 \leq j \leq N$, $|\alpha|\leq 2$), 
\begin{equation}
\sum_{|\alpha| = 2} \|r_\alpha\|_{C((0, T) \times \Omega)} + \sum_{j=1}^N \sum_{|\alpha| = 2} \|l_{j\alpha}\|_{C((0, T) \times U_j)} \leq d, 
\end{equation}
\begin{equation}
\sum_{|\alpha| \leq 2} \|r_\alpha\|_{C^{\beta/2,\beta} ((0, T) \times \Omega)} + \sum_{|\alpha| \leq 1} \|b_\alpha\|_{C^{\beta/2,\beta} ((0, T) \times \Gamma)} + \sum_{j=1}^N \sum_{|\alpha| \leq 2} \|l_{j\alpha}\|_{C^{\beta/2,\beta}((0, T) \times U_j)} \leq P.
\end{equation}
Then there is a positive constant $C$, independent of the coefficients $r_\alpha$, $b_\alpha$, $l_{j\alpha}$ and of $v$, such that
\begin{equation}\label{eq3.15}
\|\A(t) v\|_{C(\Omega)} \leq C(d \|v\|_{C^2(\Omega)} + P \|v\|_{C^1(\Omega)}), 
\end{equation}
\begin{equation}
\|\A\|_{C^{\beta/2}((0, T); \mL (C^2(\Omega), C(\Omega))} \leq CP, 
\end{equation}
\begin{equation}
\|\A(t) v\|_{C^\beta(\Omega)} \leq C(d \|v\|_{C^{2+\beta}(\Omega)} + P \|v\|_{C^{2}(\Omega)}), 
\end{equation}
\begin{equation}
\|\mB(t) v\|_{C(\Gamma)} \leq C(d \|v\|_{C^2(\Omega)} + P \|v\|_{C^1(\Omega)}), 
\end{equation}
\begin{equation}
\|\mB\|_{C^{\beta/2}((0, T); \mL (C^2(\Omega), C(\Gamma))} \leq CP, 
\end{equation}
\begin{equation}
\|\mB(t) v\|_{C^\beta(\Gamma)} \leq C(d \|v\|_{C^{2+\beta}(\Omega)} + P \|v\|_{C^{2}(\Omega)}). 
\end{equation}
So we can take $\delta = C d$ and $M = CP$. 

Observe that the possibility of applying Lemma \ref{le3.4} depends only on $\delta$ and so only on $d$. $P$ has a role only in estimates envolving the solution $u$. 
}
\end{remark}

\begin{lemma}\label{le3.8}
Suppose that (A1), (A2) and (A4) hold. For each $t_0 \in [0, T]$, we consider the constants $\delta(t_0)$, $C(T_0,A(t_0),L(t_0),\delta,M)$, $C(T_0,A(t_0),L(t_0),\delta,M,\theta)$, $C(T_0,A(t_0),L(t_0),\delta,M,\beta)$ in (I)-(III) of Lemma \ref{le3.4}, taking $A = A(t_0)$ and $L = L(t_0)$. Then each of them can be chosen independently of $t_0$ in $[0, T]$. 
\end{lemma}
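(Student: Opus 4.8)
The plan is to collapse everything onto a single scalar quantity and then run a local perturbation argument combined with the compactness of $[0,T]$. By Remark~\ref{re3.6} (together with Remark~\ref{re3.4}), the dependence on $A$ and $L$ of the threshold $\delta_0$ and of all the constants $C(T_0,\dots)$ produced by Lemma~\ref{le3.4} is only through the single constant $C(A,L,T_0)$ appearing in part~(I) of Lemma~\ref{le4.1}. Hence it suffices to prove the reduced claim
$$
\sup_{t_0 \in [0,T]} C(A(t_0),L(t_0),T_0) < \infty,
$$
where $C(A(t_0),L(t_0),T_0)$ denotes the constant in Lemma~\ref{le4.1}(I) for the frozen problem (\ref{eq3.1}) with the time-independent operators $A(t_0)$ and $L(t_0)$, each of which satisfies (A2') and (A4') by (A2) and (A4).

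The key idea is that, for $t_0$ close to a fixed $t_* \in [0,T]$, the $t_0$-frozen problem is a \emph{small} perturbation of the $t_*$-frozen one in exactly the sense required by Lemma~\ref{le3.4}. First I would fix $t_* \in [0,T]$ and rewrite the $t_0$-problem with base operators $A(t_*)$, $L(t_*)$ and time-independent perturbations $\A := A(t_0) - A(t_*)$ acting in $\Omega$ and $\mB := -(L(t_0) - L(t_*))$ acting on $\Gamma$; the latter is a second order tangential operator, so it fits the form treated in Remark~\ref{re3.5}. Since (A2) and (A4) provide $C^{\beta/2}$ continuity in $t$ with values in $C(\Omega)$ (resp. $C(\Gamma)$) of all coefficients, the $C$-norms of the top order coefficients of $\A$ and $\mB$ are bounded by a quantity $d(t_0) \to 0$ as $t_0 \to t_*$, while their full $C^{\beta/2,\beta}$ norms stay bounded by a constant $P_*$ uniform in $t_0$. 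By Remark~\ref{re3.5} the perturbations then satisfy hypotheses (b)--(c) of Lemma~\ref{le3.4} with $\delta = C\,d(t_0)$ and $M \leq C P_* =: M_*$.

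Next I would apply Lemma~\ref{le3.4} with base $A(t_*)$, $L(t_*)$. Choosing a neighbourhood $I_{t_*}$ of $t_*$ in $[0,T]$ so small that $C\,d(t_0) \leq \delta_0(A(t_*),L(t_*))$ for $t_0 \in I_{t_*}$, part~(II) of Lemma~\ref{le3.4} yields, for the solution of the $t_0$-problem, precisely the Lemma~\ref{le4.1}(I) estimate with a constant that, since $\delta \leq \delta_0$ and $M \leq M_*$, is bounded by $C(T_0,A(t_*),L(t_*),\delta_0,M_*) =: \tilde C_{t_*}$, a number depending only on $t_*$ and $T_0$ (the boundedness of this constant for $\delta \leq \delta_0$ and $M \leq M_*$ is read off from the proof of Lemma~\ref{le3.4}, where smallness of $\delta$ controls applicability while $M$ only fixes the length of the subintervals in the patching step). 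Hence $C(A(t_0),L(t_0),T_0) \leq \tilde C_{t_*}$ for every $t_0 \in I_{t_*}$. Covering the compact set $[0,T]$ by finitely many such neighbourhoods $I_{t_1},\dots,I_{t_m}$ and taking $\max_j \tilde C_{t_j}$ as the uniform bound proves the reduced claim; Remark~\ref{re3.6} then upgrades it to uniform bounds for every constant listed in the statement, and in particular gives a positive lower bound for $\delta(t_0)$.

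The point that makes this work, and the only place where genuine care is needed, is that $A(t_0)-A(t_*)$ and $L(t_0)-L(t_*)$ are of the \emph{same} (second) order as the base operators, so they can be absorbed only because Lemma~\ref{le3.4} demands smallness of the \emph{top order} coefficients in the weak $C$-norm (the parameter $\delta$), and this weak smallness is exactly what Hölder continuity in time supplies; the large but uniformly bounded $C^\beta$ size of these differences enters only through $M$ and thus affects the size of $\tilde C_{t_*}$ but never the applicability of the lemma. I expect this recognition, rather than any computation, to be the crux, the remaining steps being the bookkeeping of constants and a routine finite-covering argument.
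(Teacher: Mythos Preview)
Your proposal is correct and follows essentially the same approach as the paper: freeze the operators at a base point, regard nearby frozen operators as small time-independent perturbations fitting into Lemma~\ref{le3.4} via Remark~\ref{re3.5}, deduce a local uniform bound, and then cover $[0,T]$ by compactness. Your explicit preliminary reduction through Remark~\ref{re3.6} to the single constant $C(A(t_0),L(t_0),T_0)$ of Lemma~\ref{le4.1}(I) is a slight organizational improvement over the paper's presentation, but the substance of the argument is the same.
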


\begin{proof} We take $t_0, t_1$ in $[0, T]$. We set $A(x,D_x):= A(t_0,x,D_x)$, $L:= L(t_0)$, $\A(t):= A(t_1,x,D_x) - A(t_0,x,D_x)$, $\mB(t):= L(t_1) - L(t_0)$. We observe that $\A(t)$ and $\mB(t)$ are, in fact, independent of $t$. Then, $\forall
u \in C^{2+\beta}(\Omega)$, 
$$
\|\A(t)u\|_{C(\Omega)} \leq \omega(|t_1 - t_0|) \|u\|_{C^2(\Omega))}, 
$$
$$
\|\A\|_{C^{\beta/2}([0, T]; \mL(C^2(\Omega); C(\Omega))} \leq \omega(|t_1 - t_0|), 
$$
$$
\|\A(t)u\|_{C^\beta(\Omega)} \leq \omega(|t_1 - t_0|) \|u\|_{C^{2+\beta}(\Omega)} + M \|u\|_{C^{2}(\Omega)}, 
$$
$$
\|\mB(t)u\|_{C(\Gamma)} \leq \omega(|t_1 - t_0|) \|u\|_{C^2(\Omega))}, 
$$
$$
\|\mB\|_{C^{\beta/2}([0, T]; \mL(C^2(\Omega); C(\Gamma))} \leq \omega(|t_1 - t_0|), 
$$
$$
\|\mB(t)u\|_{C^\beta(\Gamma)} \leq \omega(|t_1 - t_0|) \|u\|_{C^{2+\beta}(\Omega)} + M \|u\|_{C^{2}(\Omega)}, 
$$
with $M \in \R^+$ independent of $t_0$ and $t_1$ and ${\displaystyle \lim_{r\to 0}} \omega(r) = 0$. We deduce from Lemma \ref{le3.4}  that there exists $r(t_0) > 0$, such that, if $|t_1 - t_0| < r(t_0)$,  the constants can be chosen independently of $t_1$. So the conclusion follows from the compactness of $[0, T]$. 

\end{proof}

Now we are able to prove Theorems \ref{th1.1} and \ref{th1.2}. 

\medskip

{\bf Proof of Theorem \ref{th1.1}. } Let $t_0 \in [0, T)$. We consider the system

\begin{equation}\label{eq3.21}
\left\{\begin{array}{ll}
D_tv(t,x) = A(t_0+t,x, D_x) v(t,x) + \phi(t,x), & (t,x) \in [0, \tau] \times \Omega, \\ \\
D_t v(t,x') + B(t_0+t,x', D_x) v(t,x') - L(t_0+t)[v(t,\cdot)_{\Gamma}] (x')  = \psi(t,x'), & (t,x') \in [0, \tau] \times \partial \Omega, \\ \\
v(0,x) = v_0(x), & x \in \Omega,
\end{array}
\right. 
\end{equation}
with $\tau \in (0, T-t_0]$. We have
$$
A(t_0+t, \cdot, D_x) = A(t_0,\cdot,D_x) + \A(t), 
$$
with $\A(t) = A(t_0+t, \cdot, D_x) - A(t_0,\cdot,D_x) $, 
$$
 B(t_0+t,\cdot, D_x) v(t,\cdot) - L(t_0+t)[v(t,\cdot)_{\Gamma}] = \mB(t)v(t,\cdot) - L(t_0)[v(t,\cdot)_{\Gamma}], 
$$
with
$$
\mB(t)v(t,\cdot) = B(t_0+t,\cdot, D_x) v(t,\cdot) - [L(t_0+t) - L(t_0)][v(t,\cdot)_{\Gamma}]. 
$$
By Remark \ref{re3.5},  there exists $M \in \R^+$ such that, $\forall t_0 \in [0, T)$,  $\forall \delta \in \R^+$, there exists $\epsilon(\delta) \in \R^+$ such that, if $|t - t_0| \leq \epsilon(\delta)$, $\forall u \in C^{2+\beta}(\Omega)$, (\ref{eq3.6B})-
(\ref{eq3.11B}) are fulfilled. We deduce from Lemmata {\ref{le3.4} and \ref{le3.8} and Remark \ref{re3.6} that there exists $\tau_0 \in \R^+$, independent of $t_0$, such that, if $\tau \leq \tau_0 \wedge (T-t_0)$, if $v_0 \in C^{2+\beta}(\Omega)$, $\phi \in 
C^{\beta/2,\beta}((0, \tau) \times \Omega)$, $\psi \in C^{\beta/2,\beta}((0, \tau) \times \Gamma)$ and 
$$
A(t_0,x', D_x) v_0(x') + \phi(0,x') = -B(t_0,x', D_x) v_0(x') - L(t_0)[v_{0|\Gamma}] (x')  + \psi(0,x'),
$$
then (\ref{eq3.21}) has a unique solution $v$ in $C^{1+\beta/2,2+\beta}((0, \tau) \times \Omega)$. So the proof of the existence and uniqueness of a solution $u$ to (\ref{eq1.1})   in $C^{1+\beta/2,2+\beta}((0, T) \times \Omega)$ can be
 performed with the same arguments in $(\delta)-(\epsilon)$ in the proof of Lemma \ref{le3.4}. 

$\square$

\medskip

{\bf Proof of Theorem \ref{th1.2}. } Clearly, (\ref{eq1.1B}) is equivalent to 
\begin{equation}\label{eq3.28}
\left\{\begin{array}{ll}
D_tu(t,x) = A(t,x, D_x) u(t,x) + f(t,x), & (t,x) \in [0, T] \times \Omega, \\ \\
D_tu(t,x') + B(t,x', D_x) u(t,x') - L(t)(u(t,\cdot)_{|\Gamma})(x')  = h(t,x') + f(t,x'), & (t,x') \in [0, T] \times \partial \Omega, \\ \\
u(0,x) = u_0(x), & x \in \Omega. 
\end{array}
\right. 
\end{equation}
So the conclusion follows immediately from Theorem \ref{th1.1}.

$\square$

\end{document}